\documentclass[11pt,a4paper]{article}
\usepackage[T1]{fontenc}
\usepackage{lmodern}
\usepackage[a4paper,margin=2.5cm]{geometry}
\usepackage{amsmath,amssymb,amsthm}
\usepackage{array,booktabs}
\usepackage{fancyhdr}
\usepackage{xcolor}
\usepackage{pict2e}   % light-weight drawing, used instead of TikZ (fast on Overleaf)
\usepackage[colorlinks=true,linkcolor=blue!45!black,citecolor=blue!45!black,
            urlcolor=blue!45!black]{hyperref}

\newtheorem{theorem}{Theorem}[section]
\newtheorem{lemma}[theorem]{Lemma}
\newtheorem{proposition}[theorem]{Proposition}
\newtheorem{fact}[theorem]{Fact}
\newtheorem*{mainthm}{Main Theorem}
\theoremstyle{definition}
\newtheorem{definition}[theorem]{Definition}
\newtheorem{remark}[theorem]{Remark}
\numberwithin{equation}{section}

\newcommand{\R}{\mathbb{R}}
\newcommand{\N}{\mathbb{N}}
\newcommand{\Q}{\mathbb{Q}}
\newcommand{\Sph}{\mathbb{S}}
\newcommand{\B}{\mathcal{B}}
\newcommand{\M}{\mathbf{M}}
\newcommand{\D}{\mathcal{D}}
\newcommand{\pos}[1]{[#1]_+}
\newcommand{\sat}{\operatorname{sat}}
\newcommand{\dist}{\operatorname{dist}}
\newcommand{\nrm}[1]{\|#1\|_\infty}
\newcommand{\INC}{\mathsf{INC}}
\newcommand{\DEC}{\mathsf{DEC}}
\newcommand{\chainbox}[2]{\fbox{\parbox{0.5\textwidth}{\centering\strut#1\\
  {\itshape\color{blue!45!black}#2\strut}}}}

\title{Lyapunov stability of polynomial vector fields is undecidable}
\author{Milan Korda$^{1,2}$}

\date{18 September 2026}

\begin{document}
\maketitle
\footnotetext[1]{Faculty of Electrical Engineering, Czech Technical University in Prague, Technick\'a 2, CZ-16627 Prague, Czechia.}
\footnotetext[2]{CNRS; LAAS; Universit\'e de Toulouse, 7 avenue du colonel Roche, F-31400 Toulouse, France.}
\setcounter{footnote}{2}

\begin{abstract}
We show that there are integers $N$ and odd $D$ such that no algorithm can decide,
from the rational coefficients of a homogeneous polynomial vector field
$F\colon\R^N\to\R^N$ of degree $D$, whether the origin is Lyapunov stable for
$\dot Y=F(Y)$. This proves a conjecture of V.\,I. Arnold. We provide a Lean formalization of the proof.

The smallest dimension $N$ for which we were able to prove undecidability is $N = 5$. An analogous undecidability result holds for global asymptotic stability and global exponential stability for non-homogenous vector fields. For dimension $N=2$ and homogenous vector fields, we establish decidability for all commonly used stability notions, building heavily on existing results.
\end{abstract}

\tableofcontents

\section{Introduction}\label{sec:intro}

The central notion of this work is the following:

\begin{definition}[Lyapunov stability]\label{def:stab}
Let $F\colon\R^N\to\R^N$ be locally Lipschitz with $F(0)=0$. The origin is
\emph{Lyapunov stable} for $\dot Y=F(Y)$ if for every $\epsilon>0$ there is
$\eta>0$ such that every solution with $|Y(0)|<\eta$ exists for all $t\ge0$ and
satisfies $|Y(t)|<\epsilon$ for all $t\ge0$.
\end{definition}
\noindent Lyapunov's first method settles the stability of an equilibrium whenever the
linearization has no eigenvalue on the imaginary axis. In the remaining cases
stability is decided by higher-order terms, and Arnold showed that no
\emph{algebraic} criterion can settle it in general~\cite{Arn70}. In his 1976 list
of problems \cite{Arn76} he asked for the algorithmic version (quoted as in~\cite{arnold_symposium}):

%(quoted as
%in~\cite{AP13}):

\begin{quote} \it Let a vector field be given by polynomials of a fixed degree, with rational coefficients. Does an algorithm exist, allowing to decide, whether the stationary point is stable?
\end{quote}

Later he communicated a more detailed version of his question~\cite{arnold_symposium}.

\begin{quote} \it
In my problem the coefficients of the polynomials of known degree and of a known number of variables are written on the tape of the standard Turing machine in the standard order and in the standard representation.

The problem is whether there exists an algorithm (an additional text for the machine independent of the values of the coefficients) such that it solves the stability problem for the stationary point at the origin (i.e., always stops giving the answer “stable” or “unstable”).

I hope, this algorithm exists if the degree is one. It also exists when the dimension is one. My conjecture has always been that there is no algorithm for some sufficiently high degree and dimension, perhaps for dimension 3 and degree 3 or even 2. I am less certain about what happens in dimension 2.
    
\end{quote}

%\begin{quote}\itshape
%Is the stability problem for stationary points algorithmically decidable?
%[\dots] Let a vector field be given by polynomials of a fixed degree, with
%rational coefficients. Does an algorithm exist, allowing to decide, whether the
%stationary point is stable?
%\end{quote}

Currently, undecidability is known for systems built from non-polynomial elementary
functions~\cite{dCD91}. For polynomial vector fields, deciding local or global asymptotic stability is
already strongly NP-hard for cubic vector fields~\cite{AP13}. The purpose of this
paper is to give a self-contained proof of the following
statement that confirms Arnold's conjecture.

\begin{mainthm}
There are integers $N$ and odd $D$ such that no algorithm decides, given the
rational coefficients of a homogeneous polynomial vector field
$F\colon\R^N\to\R^N$ of degree $D$, whether the origin is Lyapunov stable for
$\dot Y=F(Y)$.
\end{mainthm}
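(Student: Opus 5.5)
We prove the Main Theorem by a many-one reduction from the halting problem: to each counter (Minsky) machine $\M$ we effectively assign a homogeneous polynomial vector field $F_\M$ of fixed dimension $N$ and fixed odd degree $D$ such that the origin is Lyapunov stable iff $\M$ does not halt from the zero configuration. Since halting of a fixed universal two-counter machine is undecidable, so is stability.

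\emph{Dehomogenization via homogeneity.} For homogeneous $F$ of degree $D$, the flow commutes with scaling: if $Y(t)$ solves the system, so does $\lambda Y(\lambda^{D-1}t)$. Hence Lyapunov stability of the origin is equivalent to every trajectory from the unit sphere remaining bounded for all $t\ge 0$ (with no blow-up in finite time), and boundedness is then uniform. Concretely, we write $Y=r\theta$ with $\theta\in\Sph^{N-1}$. The angular part $\dot\theta=F(\theta)-\langle F(\theta),\theta\rangle\theta$ is an autonomous flow on the sphere. The radial part is $\dot r/r=r^{D-1}\langle F(\theta),\theta\rangle$, and after the time change $ds=r^{D-1}dt$ it becomes $\frac{d}{ds}\log r=g(\theta)$ with $g=\langle F,\theta\rangle$. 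Stability therefore amounts to $\int_0^s g(\theta(\sigma))\,d\sigma$ being bounded above along every angular orbit. Odd $D$ matters because it lets $F(-Y)=-F(Y)$ hold, so any even-degree polynomial can be multiplied by an even factor such as $|Y|^{2k}$ to obtain an odd homogeneous field of common degree $D$, and arbitrary affine dynamics can be embedded in a chart of the sphere.

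\emph{Encoding the machine.} On a chart of $\Sph^{N-1}$ we realize a polynomial flow simulating $\M$ robustly, using standard constructions of polynomial ODEs simulating Turing or counter machines (Graça–Campagnolo–Buescu style error-correcting maps with periodic clocks given by $\sin/\cos$ pairs on an auxiliary circle, since these are polynomial). Counters are stored in coordinates $c_1,c_2$, driven as clocks by $\INC$ and $\DEC$ steps, and the state is stored in a coordinate that is rounded towards integers. We then choose the radial rate $g$ to be $\le -\delta<0$, or integrable, while the simulation runs, and strongly positive once the halting state is reached. If $\M$ never halts, $\int g$ stays bounded above on the simulation region. If it halts, the integral diverges, $r\to\infty$, and the origin is unstable.

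\emph{Main obstacle.} The flow must behave correctly from \emph{every} initial direction, not only from encoded initial configurations. The condition is "stable iff no orbit escapes", so spurious angular orbits (non-integer counter values, initial data off the clean simulation manifold, unbounded counter coordinates) must neither produce false escape nor allow trajectories with false stability. I would handle this by making the simulation self-correcting. Every point of the chart would be attracted to an honest configuration, which is arranged via rounding maps $\sat$ and contraction to integer lattice values within each clock phase. I would also make halting the only mechanism that gives $g>0$ for a long time. Since the sphere is compact, the counters cannot be stored as unbounded coordinates. I would instead compactify them, for example storing $2^{-c}$ or using projective coordinates, so that every point of the sphere, including the "infinite counter" boundary, corresponds to a legitimate or harmless configuration. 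Showing that each angular orbit either follows an honest (possibly nonhalting) computation with $\int g$ bounded, or follows a halting one, requires the careful robustness estimates. I expect this to be the bulk of the work, and the reason the resulting $N$ and $D$ are large. The Lean formalization mentioned in the abstract suggests the real argument carefully isolates exactly these invariance and error-bound lemmas.
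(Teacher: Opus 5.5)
\textbf{Main gap: the halting state is itself an initial condition.} Your construction fails where the radial rate becomes ``strongly positive once the halting state is reached.'' Every direction on the sphere is an initial condition. So there are directions that encode the halting label itself, with arbitrary counters, or any configuration from which $\M$ halts. These directions exist whatever $\M$ does from the zero configuration, and along them $\int g\to\infty$. The field you describe is therefore unstable for every machine, and the reduction collapses.

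Self-correction does not rescue this. Attracting points to valid integer configurations cannot distinguish configurations reachable from the zero configuration from arbitrary ones: a halting configuration is valid and is left fixed by any rounding.

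The missing idea is a mechanism that makes garbage cheap. The paper pays output only at a restart label whose transition erases everything but the budget and reloads the correct input (Lemma~\ref{lem:H}(b)). It adds a budget that never decreases anywhere (Lemma~\ref{lem:H}(a)). If $U$ halts in $T$ steps, every trajectory, from every initial state, then has output at most $A_n=3(T+3)$ (Proposition~\ref{prop:halt}). If $U$ does not halt, the honest trajectory restarts and pays infinitely often. This is why the paper proves ``halts $\iff$ stable.'' Your opposite direction is not itself the problem. It could be repaired in the same spirit: make the halt label a clean reload of the zero configuration that pays a bounded amount per halt. Garbage then pays at most once before the honest, non-halting run takes over, while a halting machine halts periodically and pays infinitely often.

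\textbf{Secondary problems.} Your reduction to the sphere is misstated: homogeneity does not make boundedness of each trajectory uniform. Stability is equivalent to $\sup_{\omega,s}\int_0^s g<\infty$ (Lemma~\ref{lem:polar}). For the lift of $\dot x=-x$ with output $x^2$, every trajectory is bounded, yet $\B=\infty$ and the origin is unstable (see the remarks after Definition~\ref{def:B}). In your direction, the non-halting case needs exactly this uniform bound over all directions, not boundedness ``on the simulation region.''

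The robustness you defer is the substance of the proof, and off-the-shelf Gra\c{c}a--Campagnolo--Buescu simulations do not supply it. They are robust only near correctly encoded data and rely on auxiliary variables with correct initial values. Here every real initial state occurs, including the clock amplitude and phase, far-off copies, and wrongly initialized auxiliary variables. The paper needs the following devices:
\begin{itemize}
\item cubic relaxation, whose error is independent of the initial distance (Lemma~\ref{lem:cubic});
\item gains growing with a clock $\theta$, so that errors are summable uniformly in $\theta(0)$ (Lemma~\ref{lem:summable});
\item an amplitude window in the output;
\item smoothing of $|\cdot|$ with state-dependent precision;
\item square roots stored as variables protected by first integrals;
\item a quadratic penalty $-(1+\theta^2)W^2|R|^2$ that absorbs the cost of defects (Proposition~\ref{prop:G}).
\end{itemize}
Finally, storing counters as $2^{-c}$ would make distinct counter values arbitrarily close, so no fixed error tolerance could separate them. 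The paper avoids compactifying the counters at all: the projective lift with $\ell>\deg P$ turns the whole equator into equilibria with zero radial rate, so unbounded coordinates and finite-time escape in the chart are harmless (Proposition~\ref{prop:lift}).
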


We use homogenous in the standard sense, i.e., $F$ is homogenous of degree $D$ if $F(\lambda Y) = \lambda^D F(Y)$ for any $\lambda \in \R$. 

Since $N$ and $D$ are fixed, a vector field $F$ is described by a finite list of rational numbers,
and ``algorithm'' has its usual meaning. 

The following three remarks make the Main theorem more accurate and discuss its generalizations. Proofs of the claims made therein can be found in the supplementary material~\cite{arnold_supplement}.

\begin{remark}[Minimal dimension $N$]
 The minimal dimension $N$ for which we were able to prove undecidability is $N = 5$.  Proving or disproving it in dimension three or four remains an open problem. Arnold believed that undecidability may occur already in dimension three or perhaps even two.     
\end{remark}
\begin{remark}[Dimensions $N = 2$ and $N=1$]
We were able to establish decidability in dimension two for homogenous  vector fields and for all commonly used stability notions, building heavily on existing results. The general non-homogenous case remains open in $N=2$. The case $N = 1$ is decidable.
\end{remark}

\begin{remark}[Global asymptotic and exponential stability]
We also obtained a result analogous to the Main theorem for Global asymptotic stability (GAS) and Global exponential stabilitiy (GES) of equilibria, proving that GAS and GES are undecidable for polynomial vector fields. The construction relies on the vector field being non-homogenous; the homogenous case remains open for GAS and GES.
\end{remark}

\subsection{Lean formalization}
We carried out a Lean formalization of the Main theorem. The result can be verified in 
\begin{center}
\url{https://homepages.laas.fr/mkorda/arnold_lean.zip}. 
\end{center}
The subsequent claims in Remarks 1-4 have not been Lean-formalized, at the  time of writing this article.

\subsection*{Proof outline}

The proof is a computable reduction: for
a fixed register machine $U$ with an undecidable halting problem
(Fact~\ref{fact:minsky}) we construct, algorithmically in $n\in\N$, a homogeneous
field $F_n$ in this class such that
\begin{equation}\label{eq:reduction}
  U\text{ halts on input }n\iff 0\text{ is Lyapunov stable for }\dot Y=F_n(Y).
\end{equation}
The construction of $F_n$ never runs $U$; the halting time only appears in the
\emph{proof} of~\eqref{eq:reduction}.

\paragraph{Homogeneity turns stability into a uniform growth bound.} If $F$ is
homogeneous of degree $D$ and $Y(t)$ is a solution, then so is
$Y_\lambda(t):=\lambda\,Y(\lambda^{D-1}t)$ for every $\lambda>0$. This scaling
symmetry makes the local notion of stability global:

\begin{lemma}\label{lem:scaling}
For a homogeneous polynomial field $F$, the origin is Lyapunov stable if and only
if there is $C<\infty$ such that every solution exists for all $t\ge0$ and
satisfies $|Y(t)|\le C|Y(0)|$.
\end{lemma}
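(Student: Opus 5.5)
The plan is to prove both directions of Lemma~\ref{lem:scaling} directly from the scaling symmetry. If $Y(t)$ solves $\dot Y=F(Y)$, then $Y_\lambda(t)=\lambda Y(\lambda^{D-1}t)$ also solves it, since
\[
\dot Y_\lambda(t)=\lambda^{D}F(Y(\lambda^{D-1}t))=F(\lambda Y(\lambda^{D-1}t)).
\]
Because $F$ is polynomial, hence locally Lipschitz, solutions are unique. So the solution with initial value $\lambda Y(0)$ is exactly $Y_\lambda$, and it is defined on $[0,T/\lambda^{D-1})$ whenever $Y$ is defined on $[0,T)$. Here $D\ge1$; for $D=1$ time is not rescaled, which is harmless. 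The degenerate case $F\equiv0$ is trivial.

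\textbf{The ``if'' direction is immediate.} Given the bound $|Y(t)|\le C|Y(0)|$ with global existence, let $\epsilon>0$ and take $\eta=\epsilon/(C+1)$. Then every solution with $|Y(0)|<\eta$ exists for all $t\ge0$ and satisfies $|Y(t)|\le C\eta<\epsilon$.

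\textbf{For the ``only if'' direction}, assume Lyapunov stability and apply Definition~\ref{def:stab} with $\epsilon=1$. This gives some $\eta>0$ such that every solution with $|Y(0)|<\eta$ exists globally and stays in the unit ball. Now take an arbitrary solution $Z$ with $Z(0)\neq0$; if $Z(0)=0$, uniqueness gives $Z\equiv0$. Set $\lambda=\eta/(2|Z(0)|)$ and let $W$ be the solution with $W(0)=\lambda Z(0)$, so that $|W(0)|=\eta/2<\eta$. Hence $W$ exists for all $t\ge0$ and $|W(t)|<1$.

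Apply the scaling symmetry to $W$ with factor $1/\lambda$. The function $t\mapsto\lambda^{-1}W(\lambda^{-(D-1)}t)$ is a solution with initial value $Z(0)$. By uniqueness it coincides with $Z$ on the common interval, and it is defined for all $t\ge0$, so $Z$ exists globally. Moreover
\[
|Z(t)|\le\lambda^{-1}=\frac{2}{\eta}\,|Z(0)|,
\]
so $C=2/\eta$ works.

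\textbf{Main obstacle.} The only delicate point is the existence-time bookkeeping: we must make sure the rescaled solution really is the maximal solution through $Z(0)$, so that no blow-up can occur. This follows from uniqueness together with the fact that the rescaled curve is a global solution. In particular, the maximal interval of $Z$ cannot be shorter than that of the rescaled curve, since solutions agreeing at $t=0$ agree on the intersection of their domains. The time rescaling $t\mapsto\lambda^{D-1}t$ maps $[0,\infty)$ onto itself because $\lambda>0$, so global existence transfers in both directions.
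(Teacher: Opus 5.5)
Your proposal is correct and is essentially the paper's proof: the same rescaling with $\epsilon=1$ and $\lambda=\eta/(2|Y(0)|)$ gives $C=2/\eta$, and the converse is immediate (your choice $\eta=\epsilon/(C+1)$ just avoids dividing by $C=0$). Your explicit uniqueness and maximal-interval argument spells out what the paper compresses into ``thus $Y$ exists forever.''
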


\begin{proof}
``If'' is clear with $\eta=\epsilon/C$. Conversely, let $\eta$ correspond to
$\epsilon=1$. Given $Y(0)\ne0$, put $\lambda=\eta/(2|Y(0)|)$. The solution
$Y_\lambda$ starts at norm $\eta/2$, hence exists forever and stays in the unit
ball; thus $Y$ exists forever and $|Y(s)|<1/\lambda=(2/\eta)|Y(0)|$.
\end{proof}

So for homogeneous fields, stability is a statement about \emph{all} trajectories,
uniformly: no direction may be amplified by more than a fixed factor. In our
construction every initial state of an auxiliary (non-homogeneous) system will
correspond to a direction in $\R^N$. The ``halting $\Rightarrow$ stable'' half
of~\eqref{eq:reduction} will therefore require estimates that hold uniformly over
\emph{every} real initial state of that system, including states that encode
nothing meaningful. Most design choices below exist to provide this uniformity.

\paragraph{The two halves of~\eqref{eq:reduction} are asymmetric.} Stability
needs a bound on all trajectories; instability needs a single bad family. Hence in
the non-halting case we only ever have to follow \emph{one} carefully initialized
trajectory, while in the halting case we must control arbitrary ones. Along the
chain of systems in Figure~\ref{fig:chain} we therefore keep track of two
properties:
\begin{itemize}
\item[(H)] if $U$ halts on $n$, the output is bounded uniformly over all initial
  states;
\item[(N)] if $U$ does not halt on $n$, one explicit trajectory produces infinite
  output.
\end{itemize}

\begin{figure}[t]
\centering\small
\setlength{\fboxsep}{5pt}
\begin{tabular}{@{}c@{\hspace{1em}}>{\raggedright\arraybackslash}p{0.33\textwidth}@{}}
\chainbox{register machine $U$ with input $n\in\N$}
         {halting problem undecidable (Fact~\ref{fact:minsky})} & \\
$\Big\downarrow$ & \S\ref{sec:machine}: restart with a growing budget\\
\chainbox{restart machine $\M$; real extension $H\colon\R^m\to\R^m$}
         {halts $\Leftrightarrow$ uniformly few restarts} & \\
$\Big\downarrow$ & \S\ref{sec:monitor}: clock, two copies, cubic relaxation\\
\chainbox{clocked ODE with output $(f,o)$ on $\R^{d_0}$}
         {halts $\Leftrightarrow$ $\int o$ bounded uniformly in the initial state} & \\
$\Big\downarrow$ & \S\ref{sec:roots}: smoothing, auxiliary roots, penalty,
                   time change\\
\chainbox{polynomial ODE with output $(P_n,b_n)$ on $\R^{d}$}
         {halts $\Leftrightarrow$ $\B(P_n,b_n)<\infty$} & \\
$\Big\downarrow$ & \S\ref{sec:lift}: projective lift\\
\chainbox{homogeneous polynomial field $F_n$ on $\R^{d+1}$}
         {halts $\Leftrightarrow$ origin Lyapunov stable} &
\end{tabular}
\caption{The reduction $n\mapsto F_n$. Boxes are objects computed from $n$, arrows
are constructions, and the italic line in each box is the property proved about
it.}
\label{fig:chain}
\end{figure}

\subsection*{Roadmap}

%In one sentence: an ODE repeatedly runs a fixed computation with growing time
%budgets and emits output at each restart; a penalty keeps this working when
%auxiliary variables are initialized wrongly; and a homogeneous lift turns total
%output into logarithmic radial growth.

The reduction is a composition of
four explicit constructions (Figure~\ref{fig:chain}). We present them top-down:
first the last step (the lift), which explains \emph{what} has to be built, and
then the three steps that build it.

The central notion is the \emph{accumulated output} $\B(P,b)$ of a vector field
$P$ with a scalar ``output'' $b$: the supremum, over all initial states and all
finite times, of $\int b$ along the trajectory (Definition~\ref{def:B}).
Section~\ref{sec:lift} shows that a homogeneous lift of $(P,b)$ is stable exactly
when $\B(P,b)<\infty$. Sections~\ref{sec:machine} to~\ref{sec:roots} then build a
polynomial pair $(P_n,b_n)$ with $\B(P_n,b_n)<\infty$ exactly when $U$ halts on
$n$. Table~\ref{tab:obstacles} lists the obstacles met along the way and the device
that removes each of them.

\paragraph{Reading guide.} Sections~\ref{sec:lift} and~\ref{sec:monitor} pertain to ODEs. Section~\ref{sec:machine} contains all the computability theory that is
needed, which is one standard fact. Section~\ref{sec:roots} is elementary real
algebra. Section~\ref{sec:proof} assembles the proof and summarizes the
construction.

\subsection*{Conventions}

On machine states we use the sup norm $\nrm{\cdot}$; elsewhere $|\cdot|$ is the
Euclidean norm. We write
\[
  \pos x=\max\{x,0\}=\tfrac12\bigl(x+|x|\bigr),\qquad
  \sat(x)=\min\{\max\{x,0\},1\}=\tfrac12\bigl(|x|-|x-1|+1\bigr).
\]
From ODE theory we use only local existence and uniqueness for fields that are
continuous in time and locally Lipschitz in the state, and the continuation
criterion: a maximal solution defined on $[0,t^*)$ with $t^*<\infty$ is
unbounded~\cite[Theorems 2.2, 2.13, Corollary 2.16]{Tes12}. A \emph{solution
segment} is the restriction of a solution to a compact interval contained in its
interval of existence.

\begin{table}[t]
\centering\small
\begin{tabular}{@{}>{\raggedright\arraybackslash}p{0.34\textwidth}>{\raggedright\arraybackslash}p{0.45\textwidth}l@{}}
\toprule
Obstacle & Device & Where\\
\midrule
an initial machine state may be garbage
  & restart with a growing budget; the output counts restarts
  & Lemma~\ref{lem:H}\\
the copies $a,b$ may start arbitrarily far from their targets
  & cubic relaxation: the error after relaxation does not depend on the initial
    distance
  & Lemma~\ref{lem:cubic}\\
infinitely many steps must be simulated with bounded total error
  & relaxation gains grow with a clock $\theta$, so errors are summable, uniformly
    in $\theta(0)$
  & Lemma~\ref{lem:summable}\\
the oscillator may start with any amplitude
  & output switched off unless the squared amplitude is in $(\frac12,\frac32)$
  & \eqref{eq:output}\\
absolute values are not polynomial
  & smoothing with a precision that grows with $\theta$ and $|x|$
  & Lemma~\ref{lem:smooth}\\
square roots are not polynomial
  & auxiliary $w_j$ with $w_j^4=\D_j$, protected by a first integral
  & Lemma~\ref{lem:integrals}\\
auxiliary variables may start with wrong values
  & quadratic penalty in the output
  & Proposition~\ref{prop:G}\\
denominators
  & positive change of time
  & Lemma~\ref{lem:timechange}\\
\bottomrule
\end{tabular}
\caption{Obstacles to uniformity and the devices that remove them.}
\label{tab:obstacles}
\end{table}

\section{Stability as a bound on accumulated output: the projective lift}
\label{sec:lift}

\begin{definition}[Accumulated output]\label{def:B}
Let $P\colon\R^d\to\R^d$ be locally Lipschitz and $b\colon\R^d\to\R$ continuous.
With $x(t;x_0)$ the maximal solution of $\dot x=P(x)$, defined for
$0\le t<t^+(x_0)$, put
\begin{equation}\label{eq:B}
  \B(P,b)=\sup_{x_0\in\R^d}\ \sup_{0\le t<t^+(x_0)}\int_0^t b\bigl(x(s;x_0)\bigr)\,ds
  \ \in[0,\infty].
\end{equation}
\end{definition}

Three features of this definition matter later. (i) The supremum over $x_0$ is
essential: for $\dot x=-x$ and $b=x^2$ every trajectory has finite total output
$x_0^2/2$, yet $\B=\infty$. (ii) Only upper bounds on $b$ are relevant: $b$ may be
very negative somewhere, and we exploit this by subtracting a penalty in
Section~\ref{sec:roots}. (iii) Only finite solution segments enter, so $P$ need not
be complete; and $\B\ge0$ because $t=0$ is allowed.

\subsection*{Polar decomposition}

Let $F\colon\R^N\to\R^N$ be homogeneous of degree $D\ge1$. On the unit sphere
$\Sph=S^{N-1}$ define the \emph{radial rate} and the \emph{tangential field}
\[
  \lambda_F(\omega)=\langle F(\omega),\omega\rangle,\qquad
  F^\top(\omega)=F(\omega)-\lambda_F(\omega)\,\omega .
\]
For a solution $Y(t)\neq0$ write $Y=r\omega$ with $r=|Y|$. Since
$\dot Y=r^DF(\omega)$, we get $\dot r=r^D\lambda_F(\omega)$ and
$\dot\omega=r^{D-1}F^\top(\omega)$. In the new time $s$ with $ds=r^{D-1}dt$,
\begin{equation}\label{eq:polar}
  \frac{d\omega}{ds}=F^\top(\omega),\qquad \frac{d\log r}{ds}=\lambda_F(\omega).
\end{equation}
Thus the direction follows an autonomous flow $\Phi^s$ on the compact sphere,
which is complete, and $\log r$ integrates the radial rate along it.

\begin{lemma}\label{lem:polar}
The origin is stable for $\dot Y=F(Y)$ if and only if
\[
  M_F:=\sup_{\omega\in\Sph,\ s\ge0}\int_0^s\lambda_F(\Phi^{s'}\omega)\,ds'<\infty .
\]
\end{lemma}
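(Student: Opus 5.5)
We reduce the statement to Lemma~\ref{lem:scaling} by way of the polar decomposition~\eqref{eq:polar}. By Lemma~\ref{lem:scaling}, stability is equivalent to the existence of $C<\infty$ such that every solution exists for all $t\ge0$ and satisfies $|Y(t)|\le C|Y(0)|$. Fix $Y(0)=r_0\omega_0\neq0$. By uniqueness the solution never reaches $0$, since $0$ is an equilibrium. So $\omega(t)$ and $r(t)$ are defined on the whole maximal interval $[0,t^+)$. The reparametrization $s(t)=\int_0^t r(\tau)^{D-1}d\tau$ is strictly increasing, and along it $\omega(t)=\Phi^{s(t)}\omega_0$ and
\[
\log\frac{r(t)}{r_0}=\int_0^{s(t)}\lambda_F(\Phi^{s'}\omega_0)\,ds'=:I(\omega_0,s(t)).
\]
This holds because the pair $(\omega,\log r)$ solves~\eqref{eq:polar} in the variable $s$, and the sphere flow is uniquely determined.

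\emph{``If''.} Assume $M_F<\infty$. For $t<t^+$ we get $r(t)\le e^{M_F}r_0$, so the maximal solution stays bounded. By the continuation criterion $t^+=\infty$, and the bound $|Y(t)|\le e^{M_F}|Y(0)|$ holds for all $t\ge0$. Lemma~\ref{lem:scaling} with $C=e^{M_F}$ then gives stability.

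\emph{``Only if''.} Assume stability, and take $C$ from Lemma~\ref{lem:scaling}. Then $I(\omega_0,s(t))\le\log C$ for all $t\ge0$. The main obstacle is that this bound only controls $I(\omega_0,s)$ for $s$ in the range $[0,s_\infty)$, where $s_\infty=\int_0^\infty r^{D-1}dt$. We need $s_\infty=\infty$ for every $\omega_0$, and this can fail if $r(t)\to0$ fast enough.

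We handle this as follows. Suppose $s_\infty<\infty$ for some $\omega_0$. For $D>1$, choose instead a solution started at $r_0\omega_0$ with $r_0$ large. By scaling, $Y_\lambda(t)=\lambda Y(\lambda^{D-1}t)$ has the same $s$-parametrization, because $ds=r^{D-1}dt$ is scale invariant. So $s_\infty$ does not depend on $r_0$, and changing the radius does not help. Instead we argue directly: $\frac{d\log r}{ds}=\lambda_F$ is bounded by $\|F\|_{\infty,\Sph}=:K$, so on $[0,s_\infty)$ we have $\log r\ge\log r_0-Ks_\infty>-\infty$. Hence $r$ is bounded below by a positive constant. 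Then $s_\infty=\int_0^\infty r^{D-1}dt=\infty$ when $D\ge1$, which is a contradiction. For $D=1$ this is immediate, since $s=t$.

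Therefore $s(t)$ ranges over all of $[0,\infty)$. We obtain $I(\omega_0,s)\le\log C$ for all $s\ge0$ and all $\omega_0$, so $M_F\le\log C<\infty$.
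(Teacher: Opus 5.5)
Your proof is correct, and it differs from the paper's only in how the ``only if'' direction is organized. Your ``if'' direction matches the paper's: $r$ stays bounded, the continuation criterion applies, and $C=e^{M_F}$.

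The paper proves ``only if'' by contraposition. From $M_F=\infty$ it picks $\omega_j,s_j$ with $L_j\to\infty$ and starts at $e^{-L_j}\omega_j$. It then shows that $s(t)$ reaches $s_j$ in finite time: two-sided bounds on $r$ rule out blow-up and keep $ds/dt$ bounded below. At that moment the solution is on the unit sphere.

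You argue directly instead. Lemma~\ref{lem:scaling} gives you global existence and the uniform constant $C$, so all that remains is to show that $t\mapsto s(t)$ maps onto $[0,\infty)$. You get this from the lower bound $r(t)\ge r_0\exp\bigl(-s(t)\max_{\Sph}|F|\bigr)$.

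Both routes rest on the same estimate: $r$ stays away from $0$ while $s$ is bounded. Yours is shorter because Lemma~\ref{lem:scaling} absorbs the instability construction and the blow-up discussion. The paper's is self-contained, does not need Lemma~\ref{lem:scaling}, and exhibits explicit destabilizing initial data tending to $0$.

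Two small cleanups. The paragraph about rescaling to large $r_0$ is a dead end that you abandon, so you can delete it. Your constant $K$ clashes with $K=2^{20}$ in Section~\ref{sec:monitor}, so rename it.
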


\begin{proof}
Let $Y(0)=r_0\omega_0\neq0$. As $F(0)=0$, uniqueness gives $Y(t)\ne0$ on the
maximal interval, and by~\eqref{eq:polar}
\begin{equation}\label{eq:radius}
  r(t)=r_0\exp\int_0^{s(t)}\lambda_F(\Phi^{s'}\omega_0)\,ds',\qquad
  s(t)=\int_0^tr^{D-1}.
\end{equation}
If $M_F<\infty$, then $r(t)\le e^{M_F}r_0$. Solutions are therefore bounded,
hence global, and $|Y(0)|<e^{-M_F}\epsilon$ implies $|Y(t)|<\epsilon$ for all
$t\ge0$.

If $M_F=\infty$, choose $\omega_j$ and $s_j$ with
$L_j:=\int_0^{s_j}\lambda_F(\Phi^{s'}\omega_j)\,ds'\to\infty$, and start at
$Y_j(0)=e^{-L_j}\omega_j$. As long as $s(t)\le s_j$, \eqref{eq:radius} keeps
$r(t)$ between two positive constants, so the solution cannot blow up and
$ds/dt=r^{D-1}$ stays bounded below. Hence $s(t)$ reaches $s_j$ at a finite time,
where $r=e^{-L_j}e^{L_j}=1$. Initial points converging to $0$ thus reach the unit
sphere, and the origin is unstable.
\end{proof}

In the language of Lemma~\ref{lem:scaling}, one can take $C=e^{M_F}$.
Stability of a homogeneous field is thus a statement about an output, the radial
rate, integrated along a flow on a compact manifold and bounded uniformly in the
initial point. We now go the other way and realize a prescribed pair $(P,b)$ in
this form.

\subsection*{Prescribing the flow on the sphere and the radial rate}

Fix $P\in\Q[x]^d$, $b\in\Q[x]$, and an odd integer $\ell$ with $\ell>\deg P$ and
$\ell-1>\deg b$ (the zero polynomial has degree $0$). For
$Y=(y,z)\in\R^d\times\R$ set
\[
  A(Y)=\bigl(z^\ell P(y/z),\,0\bigr)\in\R^{d+1},\qquad \beta(Y)=z^{\ell-1}b(y/z),
\]
understood as polynomials after expansion. They are homogeneous of degrees $\ell$
and $\ell-1$, and they vanish on $\{z=0\}$ because the degree inequalities are
strict. Define
\begin{equation}\label{eq:F}
  F(Y)=|Y|^2A(Y)-\langle Y,A(Y)\rangle\,Y+|Y|^2\beta(Y)\,Y .
\end{equation}
This is a homogeneous polynomial field of odd degree $\ell+2$ with rational
coefficients. The first two terms are $|Y|^2$ times the component of $A(Y)$
orthogonal to $Y$ (the factor $|Y|^2$ only makes this projection polynomial), and
the last term is radial. Hence on the unit sphere
\begin{equation}\label{eq:Fsphere}
  F^\top(\omega)=A(\omega)-\langle\omega,A(\omega)\rangle\,\omega,\qquad
  \lambda_F(\omega)=\beta(\omega).
\end{equation}

The \emph{equator} $\Sph\cap\{z=0\}$ consists of equilibria of $F^\top$ at which
$\lambda_F=0$; in fact every point of the hyperplane $\{z=0\}$ is an equilibrium
of $F$. On the open upper hemisphere use the central projection $x=y/z$, with
inverse $\omega=(x,1)/\sqrt{1+|x|^2}$. Writing $\omega'=d\omega/ds$ and
$c=\langle\omega,A(\omega)\rangle$, equation~\eqref{eq:Fsphere} gives
$(y',z')=(z^\ell P(x)-cy,\,-cz)$, hence
\[
  \frac{dx}{ds}=\frac{y'z-yz'}{z^2}=z^{\ell-1}P(x),\qquad
  \lambda_F(\omega)=z^{\ell-1}b(x).
\]
Since $\ell-1$ is even, $z^{\ell-1}=(1+|x|^2)^{-(\ell-1)/2}>0$, and in the time
$\tau$ with $d\tau=z^{\ell-1}ds$,
\begin{equation}\label{eq:chart}
  \frac{dx}{d\tau}=P(x),\qquad \int\lambda_F(\omega)\,ds=\int b(x)\,d\tau .
\end{equation}
The lower hemisphere behaves in the same way, since $F(-Y)=-F(Y)$.

\begin{remark}[Poincar\'e compactification]
On the open upper hemisphere, $F^\top$ is a positive multiple of the Poincar\'e
compactification of $P$~\cite[Ch.~5]{DLA06}. The usual compactification multiplies
by $z^{\deg P-1}$ to obtain a nontrivial flow at infinity. Our larger power turns
all points at infinity into equilibria, so a trajectory of $P$ escaping in finite
time becomes a trajectory on the sphere that creeps towards the equator forever.
The radial direction, which the compactification discards, stores the
accumulated output.
\end{remark}

\begin{proposition}[Projective lift]\label{prop:lift}
For $F$ as in~\eqref{eq:F}, the origin is Lyapunov stable if and only if
$\B(P,b)<\infty$.
\end{proposition}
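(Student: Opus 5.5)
By Lemma~\ref{lem:polar}, it suffices to show $M_F=\B(P,b)$, or at least that one is finite exactly when the other is. I will compute $\int_0^s\lambda_F(\Phi^{s'}\omega)\,ds'$ separately for three classes of starting points $\omega\in\Sph$: the equator, the open upper hemisphere, and the open lower hemisphere.

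\emph{Equator.} If $\omega$ lies on the equator it is an equilibrium of $F^\top$ with $\lambda_F(\omega)=\beta(\omega)=0$, so the integral is $0$. These points contribute nothing, and in any case $0\le\B$.

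\emph{Upper hemisphere.} Take $\omega_0=(x_0,1)/\sqrt{1+|x_0|^2}$. First, the upper hemisphere is invariant under $\Phi^s$ for all $s\ge0$. The equator is invariant because it consists of equilibria, so by uniqueness the forward orbit of $\omega_0$ can never reach it. On the whole orbit we may therefore use the chart $x=y/z$. There the time change $\tau(s)=\int_0^s z(\Phi^{s'}\omega_0)^{\ell-1}\,ds'$ is $C^1$ and strictly increasing, since $z^{\ell-1}>0$. By~\eqref{eq:chart}, $x(\tau(s))$ solves $\dot x=P(x)$ with $x(0)=x_0$, and
\[
  \int_0^s\lambda_F(\Phi^{s'}\omega_0)\,ds'=\int_0^{\tau(s)}b\bigl(x(\tau';x_0)\bigr)\,d\tau'.
\]
For this identity to make sense I need $\tau(s)<t^+(x_0)$ for every $s$. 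This holds because $\Phi$ is complete on the compact sphere, so $x(\tau(s))=y/z$ is defined for all $s$ and stays a solution of $\dot x=P(x)$. Hence $x$ is defined on $[0,\lim_{s\to\infty}\tau(s))$, and this interval lies inside the maximal interval $[0,t^+(x_0))$. This bounds $M_F$ on the upper hemisphere by $\B(P,b)$.

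For the reverse inequality I need that $\tau$ maps $[0,\infty)$ onto $[0,t^+(x_0))$, i.e.\ $\tau(s)\to t^+(x_0)$. Suppose instead that $\tau_\infty:=\lim\tau(s)<t^+(x_0)$. Then $x$ stays in a compact set $K$ on $[0,\tau_\infty]$. On $K$ the value $z^{\ell-1}=(1+|x|^2)^{-(\ell-1)/2}$ is bounded below by a positive constant, so $\tau(s)\ge cs\to\infty$, a contradiction. Therefore every finite $t<t^+(x_0)$ equals $\tau(s)$ for some $s$, and taking suprema gives equality of the two suprema over the upper hemisphere.

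\emph{Lower hemisphere.} Since $F$ has odd degree, $F(-Y)=-F(Y)$. Hence $\Phi^s(-\omega)=-\Phi^s(\omega)$, and $\lambda_F(-\omega)=\beta(-\omega)=\beta(\omega)$ because $\beta$ has even degree $\ell-1$. The lower hemisphere therefore yields exactly the same integrals as the upper one. Combining the three cases gives $M_F=\B(P,b)$, and Lemma~\ref{lem:polar} finishes the proof.

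The main obstacle is the surjectivity of the time change in the upper-hemisphere case, that is, checking that a trajectory of $P$ escaping in finite time corresponds to the whole infinite forward $s$-orbit creeping to the equator. The compactness argument above handles it. A secondary point to verify carefully is that the chart computation~\eqref{eq:chart} really holds with the factor $z^{\ell-1}$ evaluated along the orbit, so that $\tau$ is the correct reparametrization.
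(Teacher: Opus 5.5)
Your proof is correct and follows the same route as the paper. You reduce to $M_F=\B(P,b)$ via Lemma~\ref{lem:polar}, note that the equator contributes $0$, use invariance of the open hemispheres together with the chart identity~\eqref{eq:chart}, and treat the lower hemisphere by the oddness of $F$. The only difference is in the converse direction: the paper lifts a $P$-segment on $[0,T]$ directly to the sphere, traversed in the finite $s$-time $\int_0^T(1+|x|^2)^{(\ell-1)/2}\,d\tau$, whereas you reach the same conclusion by showing $\tau(s)\to t^+(x_0)$ with a compactness argument.
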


\begin{proof}
By Lemma~\ref{lem:polar} it suffices to show $M_F=\B(P,b)$. Orbits on the equator
contribute $0$, and the two open hemispheres are invariant because the equator
consists of equilibria. By~\eqref{eq:chart}, a trajectory segment of $\Phi$ on
$[0,s]$ in an open hemisphere corresponds to a solution segment of $P$ on
$[0,\tau(s)]$, with $\tau(s)=\int_0^sz^{\ell-1}<\infty$ and the same output
integral. Conversely, a solution segment $x\colon[0,T]\to\R^d$ of $P$ lifts to
$\omega(\tau)=(x(\tau),1)/\sqrt{1+|x(\tau)|^2}$, traversed in the finite
$s$-time $\int_0^T(1+|x|^2)^{(\ell-1)/2}d\tau$. Both suprema are therefore taken
over the same set of numbers, together with $0$.
\end{proof}

From now on the task is to build polynomial pairs $(P_n,b_n)$ with
$\B(P_n,b_n)<\infty$ exactly when $U$ halts on $n$.

\section{The discrete skeleton: restarting a register machine}\label{sec:machine}

\subsection*{Register machines}

A register machine $U$ has registers $r\in\N^k$, instruction labels $1,\dots,S$
and a halting label $S+1$. The instruction at a label is either $\INC(\nu;q')$,
which increments $r_\nu$ and jumps to $q'$, or $\DEC(\nu;q_0,q_+)$, which jumps to
$q_0$ if $r_\nu=0$ and otherwise decrements $r_\nu$ and jumps to $q_+$. On input
$n\in\N$ the machine starts at label $1$ with registers $r_n=(n,0,\dots,0)$; it
\emph{halts on $n$} if it reaches the label $S+1$, and the number $T$ of steps
needed is the \emph{halting time}. A register machine is thus nothing but a
discrete dynamical system on $\N^k\times\{1,\dots,S+1\}$ whose rules use only
$\pm1$ and a test for zero.

\begin{fact}[Turing, Minsky]\label{fact:minsky}
There is a register machine $U$ such that no algorithm decides, given $n\in\N$,
whether $U$ halts on input $n$.
\end{fact}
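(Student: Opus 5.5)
The plan is to obtain Fact~\ref{fact:minsky} from the undecidability of the halting problem for Turing machines, together with Minsky's simulation of Turing machines by register machines. This fact is standard, and the paper uses it as a black box. Still, the reduction should be spelled out, because the input convention $r_n=(n,0,\dots,0)$ is specific.

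First, we fix a universal Turing machine, or any Turing machine $T_0$ whose halting set $K=\{n: T_0 \text{ halts on } n\}$ is not decidable. Such a machine exists by Turing's diagonal argument: $K$ is the self-halting set under a Gödel numbering. The content of that argument is that if a decider $h$ for $K$ existed, the machine that, on input $e$, runs $h$ on the appropriate encoding and loops exactly when $h$ accepts would contradict itself on its own index.

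Second, we simulate $T_0$ by a register machine with only $\INC$/$\DEC$ instructions. We follow Minsky's construction. The tape to the left and right of the head is encoded as two natural numbers, read in base $2$ or base equal to the alphabet size. The head symbol and the finite control go into the instruction labels. Head moves become multiplication and division by the base, and reading the lowest digit becomes a remainder computation. Each of these is implemented by loops of $\DEC$/$\INC$ on a few scratch registers. Zero tests come for free from $\DEC(\nu;q_0,q_+)$.

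Third, we match the input convention. The register machine starts with $r_1=n$ and the other registers zero, but the Turing tape must contain the encoding of $n$, for instance in unary or binary. So we prepend a finite block of instructions that converts $n$ in $r_1$ into the tape-number encoding of the initial tape. In unary, this means computing $\sum_{i<n} 2^i$ by repeated doubling-plus-one, which is again a loop of increments and decrements. The resulting machine $U$ halts on $n$ if and only if $T_0$ halts on $n$. Moreover, the map $n\mapsto$ (does $U$ halt) would decide $K$, which is impossible.

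The main obstacle is not conceptual but lies in bookkeeping: checking that multiplication and division by the base with remainder can be done using only increments, decrements and zero tests, and that the simulation halts exactly when $T_0$ halts. The latter requires mapping the halting states of $T_0$ to the label $S+1$ and making sure no other path reaches $S+1$. I would handle this by citing Minsky's theorem (two counters suffice, or at least finitely many) and verifying only the input-loading prefix explicitly.
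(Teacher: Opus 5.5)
Your proposal is correct and follows the same route as the paper's remark: you start from a Turing machine with undecidable halting set and transfer it to a register machine via Minsky's encoding of the two tape halves as base-$B$ numerals in registers, keeping the scanned symbol and control state in the label. The only difference is the input convention: you prepend a loader that converts $n$ into the tape numeral, so the halting set of $U$ is exactly $K$, whereas the paper reads $n$ itself as the base-$B$ code of the input word (so no loader is needed) and reduces via the computable map from words to codes.
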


\begin{remark}
Here is why. Take a universal Turing machine, whose halting set is
undecidable~\cite{Tur36}, and store the tape on either side of the head as two
stacks, each kept in a register as a numeral in a base $B$ larger than the
alphabet~\cite{Min61,Min67}. Pushing a symbol is $N\mapsto BN+\text{digit}$ and
popping is division with remainder by $B$; both are loops of increments,
decrements and zero tests with an auxiliary register, while the current symbol
and the machine state are kept in the label. If the first register initially holds
the code $n$ of an input word, read as a base-$B$ numeral, then $U$ halts on $n$ if
and only if the Turing machine halts on the word. Codes are easy to recognize, so
halting of $U$ is undecidable.
\end{remark}

From now on $U$, $k$ and $S$ are fixed, and $n$ is the only input.

\subsection*{Restarting with growing budgets}

Enlarge the state to $a=(h,c,r,q)$, where $h$ is a \emph{budget}, $c$ a
\emph{countdown} of remaining steps, and the label $q\in\{0,1,\dots,S+1\}$ has one
new value $0$, called \emph{restart}. One step of the restarting machine $\M$ is:
\begin{itemize}
\item $q=0$ (restart): $a\mapsto(h^++1,\,h^++1,\,r_n,\,1)$, where
  $h^+=\max\{h,0\}$;
\item $1\le q\le S$ and $c\le0$ (budget exhausted): $a\mapsto(h,\,c-1,\,r,\,0)$;
\item $1\le q\le S$ and $c\ge1$: $a\mapsto(h,\,c-1,\,r',\,q')$, where $(r',q')$ is
  one step of $U$;
\item $q=S+1$ (halted): $a\mapsto a$.
\end{itemize}
A restart erases everything except the budget, loads the correct input, and grants
a budget one larger than before. From the restart state $(0,0,r_n,0)$, the machine
$\M$ therefore runs $U$ with budgets $1,2,3,\dots$. If $U$ halts after $T$ steps,
the attempt with budget $T$ succeeds and no further restart occurs; if $U$ never
halts, $\M$ restarts infinitely often. Thus
\begin{equation}\label{eq:restarts}
  U\text{ halts on }n\iff\M\text{ restarts only finitely often.}
\end{equation}
The same is true from every integer state, and uniformly so: after the $j$-th
restart the budget is at least $j$, so after at most $T$ restarts the budget
suffices and the run halts. This is why our ODE will count restarts rather than
steps. The number of steps spent at ordinary labels also detects non-halting, but
it is not bounded uniformly over initial states: a garbage countdown can be
arbitrarily large. The rest of the construction preserves the uniform bound on
restarts when $\M$ is embedded in a flow with arbitrary real initial data.

\subsection*{Extension to real states}

We extend $\M$ to a map $H\colon\R^m\to\R^m$, $m=k+3$, using only rational
constants, $+$, $\times$ and $|\cdot|$. We use the bump and the zero test
\[
  \chi(t)=\sat(2-8|t|),\qquad Z(t)=\sat\bigl(\tfrac32-2t\bigr):
\]
$\chi$ equals $1$ for $|t|\le\frac18$ and vanishes for $|t|\ge\frac14$, while $Z$
equals $1$ for $t\le\frac14$ and $0$ for $t\ge\frac34$. For every label define a
branch map $M_i\colon\R^m\to\R^m$:
\[
  M_0(a)=\bigl(\pos h+1,\,\pos h+1,\,r_n,\,1\bigr),\qquad
  M_{S+1}(a)=(h,\,c,\,r,\,S+1),
\]
and, abbreviating $\zeta=Z(c)$ and $\zeta_\nu=Z(r_\nu)$, for the instruction at
label $i$:
\begin{align*}
  \INC(\nu;q')&:\ M_i(a)=\bigl(h,\,c-1,\,r+(1-\zeta)e_\nu,\,(1-\zeta)q'\bigr),\\
  \DEC(\nu;q_0,q_+)&:\ M_i(a)=\bigl(h,\,c-1,\,r-(1-\zeta)(1-\zeta_\nu)e_\nu,\,
     (1-\zeta)(\zeta_\nu q_0+(1-\zeta_\nu)q_+)\bigr).
\end{align*}
Finally
\begin{equation}\label{eq:H}
  H(a)=M_{S+1}(a)+\sum_{i=0}^{S}\chi(q-i)\bigl(M_i(a)-M_{S+1}(a)\bigr).
\end{equation}
On integer states $H=\M$. A state whose label is far from $\{0,\dots,S\}$ is
simply declared halted. Only $r_n$ depends on the input.

The fine structure of $H$ plays no role. The proof uses only the following three
properties, and the first two hold on all of $\R^m$.

\begin{lemma}\label{lem:H}
The map $H$ has the following properties.
\begin{itemize}
\item[(a)] \emph{The budget never decreases:}
  $H_h(a)=h+\chi(q)\bigl(1+\pos{-h}\bigr)\ge h$ for every $a\in\R^m$.
\item[(b)] \emph{Restarts are clean:} if $|q|\le\frac18$, then $H(a)=M_0(a)$, and
  $M_0$ is $1$-Lipschitz on $\R^m$ for $\nrm{\cdot}$.
\item[(c)] \emph{Exactness near honest states:} call $a^*$ \emph{regular} if
  $q^*\in\{0,\dots,S+1\}$, $r^*\in\N^k$, and $c^*\in(-\infty,0]\cup[1,\infty)$
  whenever $1\le q^*\le S$; the rules of $\M$ apply verbatim to such states, with
  $h^*,c^*$ real. For each regular $a^*$ there is a $1$-Lipschitz map $F_{a^*}$
  with $F_{a^*}(a^*)=\M(a^*)$ and $H=F_{a^*}$ on the ball
  $\nrm{a-a^*}\le\frac18$.
\end{itemize}
\end{lemma}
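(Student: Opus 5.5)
All three parts follow by reading off the $h$-component and the label-dependence of~\eqref{eq:H}, using only the supports of $\chi$ and the locally affine structure of $\sat$, $\pos\cdot$ and $|\cdot|$.

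For (a), I will compute the $h$-component of each branch. Every $M_i$ with $1\le i\le S+1$ leaves $h$ unchanged, so only $i=0$ contributes to the sum in~\eqref{eq:H}. This gives $H_h(a)=h+\chi(q)(\pos h+1-h)$. Since $\pos h-h=\pos{-h}$, the formula follows, and $\chi\ge0$ gives $H_h\ge h$.

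For (b), take $|q|\le\frac18$. Then $\chi(q)=1$, and for $i\ge1$ we have $|q-i|\ge\frac78>\frac14$, so $\chi(q-i)=0$. Hence $H=M_{S+1}+(M_0-M_{S+1})=M_0$. The map $M_0$ depends only on $h$, through $h\mapsto\pos h+1$, which is $1$-Lipschitz; its other outputs are constant. So $M_0$ is $1$-Lipschitz for $\nrm\cdot$.

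For (c), I split on $q^*$. The uniform tool is that the labels $0,\dots,S+1$ are $1$ apart. On the ball $|q-q^*|\le\frac18$ exactly one weight $\chi(q-i)$ with $i=q^*$ is nonzero, and it equals $1$ (or all weights vanish if $q^*=S+1$). Hence $H=M_{q^*}$ on the ball, and I must show that $M_{q^*}$ agrees on the ball with a $1$-Lipschitz map taking the value $\M(a^*)$ at $a^*$.

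\emph{Case $q^*=0$.} This is part (b), since $M_0(a^*)=\M(a^*)$.

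\emph{Case $q^*=S+1$.} $M_{S+1}$ is the identity except that it sets the label to the constant $S+1$, which is $1$-Lipschitz.

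\emph{Case $1\le q^*\le S$.} I split on $c^*$. If $c^*\le0$, then $c\le\frac18$ on the ball, so $\zeta=Z(c)=1$ there. Thus $M_i(a)=(h,c-1,r,0)$, a translate-projection and hence $1$-Lipschitz, and it equals the ``budget exhausted'' rule at $a^*$. If $c^*\ge1$, then $c\ge\frac78$, so $\zeta=0$. For $\INC$ this gives $(h,c-1,r+e_\nu,q')$, an isometric translation in the relevant coordinates. For $\DEC$, $r^*_\nu\in\N$ yields two subcases. If $r^*_\nu=0$, then $r_\nu\le\frac18$, so $\zeta_\nu=1$ and the map is $(h,c-1,r,q_0)$. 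If $r^*_\nu\ge1$, then $r_\nu\ge\frac78$, so $\zeta_\nu=0$ and the map is $(h,c-1,r-e_\nu,q_+)$. Each of these is a translation composed with setting a constant label, hence $1$-Lipschitz in sup norm, and at $a^*$ each coincides with one step of $U$. Define $F_{a^*}$ as this globally defined affine map; it agrees with $H$ on the ball.

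The only real obstacle is bookkeeping. I must verify the thresholds $\frac14,\frac34$ of $Z$ and $\frac18,\frac14$ of $\chi$ against the $\frac18$-ball. I must also remember that in the branch $c^*\le0$ the label must come out as $(1-\zeta)q'=0$, i.e.\ the restart label, consistent with $\M$.
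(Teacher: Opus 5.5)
Your proof is correct and follows the paper's route. For (a) you read off the $h$-component; for (b) you use the support of $\chi$; for (c) you note that on the $\frac18$-ball only the bump at $q^*$ survives and the zero tests $Z(c)$, $Z(r_\nu)$ are frozen, so $H$ is a translation of $(h,c,r)$ with a constant label. You simply write out the case analysis and the $1$-Lipschitz check for $M_0$ more explicitly than the paper does.
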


\begin{proof}
(a) Only $M_0$ changes $h$, and $\pos h+1=h+1+\pos{-h}$. (b) For $|q|\le\frac18$
the bump $\chi(q)$ equals $1$ and all $\chi(q-i)$, $i\ge1$, vanish. (c) On the
ball, $\chi(q-q^*)=1$ and all other bumps vanish. Moreover $c$ and every $r_\nu$
avoid $(\frac18,\frac78)$, so the zero tests are constant there. Hence $H$
coincides on the ball with $M_0$ if $q^*=0$, with $M_{S+1}$ if $q^*=S+1$, and
otherwise with a map that translates $(h,c,r)$ by a fixed vector and sets $q$ to a
constant.
\end{proof}

Properties (a) and (b) make the budget a Lyapunov-like quantity that is valid
everywhere and make every restart exact, whatever garbage preceded it. Together
with the local exactness (c), this is all the robustness we need.

\section{Robust simulation by a clocked ODE}\label{sec:monitor}

\subsection*{The equations}

The \emph{monitor} has state $x=(\theta,u,v,a,b)\in\R\times\R^2\times\R^m\times\R^m$,
so $x\in\R^{d_0}$ with $d_0=3+2m$, and reads
\begin{equation}\label{eq:monitor}
\begin{aligned}
  \dot\theta&=1,\qquad \dot u=-v,\qquad \dot v=u,\\
  \dot b&=\kappa(\theta)\,\pos u\,\bigl(H(a)-b\bigr)^3+\varphi_b(t),\\
  \dot a&=\kappa(\theta)\,\pos{-u}\,(b-a)^3+\varphi_a(t),
\end{aligned}
\qquad \kappa(\theta)=K(1+\theta^2)^2,\quad K=2^{20},
\end{equation}
with cubes taken componentwise. The \emph{forcing} $\varphi=(\varphi_a,\varphi_b)$
is a continuous function of time. The monitor proper has $\varphi=0$, and we denote
its vector field by $f$. Forcing is a bookkeeping device: in
Section~\ref{sec:roots} every approximation error will be treated as forcing. The
output is
\begin{equation}\label{eq:output}
  o(x)=\mu(u^2+v^2)\,\pos u\,\chi(4a_q),\qquad \mu(s)=\bigl[1-2|s-1|\bigr]_+ .
\end{equation}

\begin{figure}[htb]
\centering\small
\setlength{\unitlength}{1cm}
\begin{picture}(11,4.6)(-5.5,-2.3)
  % colours are set inside each \put, so that no stray space shifts the picture
  \put(-2.4,0){\color{gray}\vector(1,0){4.9}}
  \put(0,-2.2){\color{gray}\vector(0,1){4.3}}
  \put(2.6,0){\makebox(0,0)[l]{$u$}}
  \put(0,2.25){\makebox(0,0)[b]{$v$}}
  \linethickness{1.2pt}
  \put(0,0){\color{blue!45!black}\arc[-90,90]{1.5}}
  \put(0,0){\color{gray!70}\arc[90,270]{1.5}}
  \linethickness{2.6pt}
  \put(0,0){\color{blue!45!black!45}\arc[-60,60]{1.72}}
  \put(0,0){\color{gray!45}\arc[120,240]{1.72}}
  \thinlines
  \put(0.667,1.832){\vector(-940,342){0.38}}
  \put(-0.667,-1.832){\vector(940,-342){0.38}}
  \put(0,-1.5){\circle*{0.13}}
  \put(0.2,-2.0){\makebox(0,0)[l]{turn starts, $\psi=-\frac\pi2$}}
  \put(2.0,0.95){\makebox(0,0)[l]{\shortstack[l]{compute: $a$ frozen,\\
     $b\to H(a)$, output paid}}}
  \put(-2.0,0.95){\makebox(0,0)[r]{\shortstack[r]{copy: $b$ frozen,\\
     $a\to b$}}}
\end{picture}
\caption{The clock $(u,v)=\rho(\cos\psi,\sin\psi)$ turns counterclockwise with
$\dot\psi=1$. The outer arcs mark the middle thirds of the two half-turns, used in
the error estimates.}
\label{fig:clock}
\end{figure}

Write $(u,v)=\rho(\cos\psi,\sin\psi)$. The amplitude $\rho$ is constant and
$\dot\psi=1$ (Figure~\ref{fig:clock}). Each ingredient has a specific purpose.
\begin{itemize}
\item \emph{Sample and hold.} During the half-turn $u>0$ (\emph{compute}) the copy
  $a$ is frozen and each $b_i$ is pulled towards $H_i(a)$. During the half-turn
  $u<0$ (\emph{copy}) $b$ is frozen and $a$ is pulled towards $b$. One full turn
  therefore performs $a\mapsto H(a)$ approximately. This device, which lets a flow
  iterate a map, goes back to Branicky~\cite{Bra95} and is a standard way to
  simulate maps by ODEs~\cite{GCB08,BGP17}. What is new here is that every
  estimate must be uniform over all initial states.
\item \emph{Cubic relaxation.} A linear relaxation $\dot y=\gamma(z-y)$ leaves the
  error $e^{-\Gamma}|y(0)-z|$, which is not small uniformly in $y(0)$. The cubic
  relaxation forgets its initial condition completely: its error is at most
  $(2\Gamma)^{-1/2}$ (Lemma~\ref{lem:cubic}).
\item \emph{Growing gain.} The error made in a half-turn is
  $O\bigl((1+\theta^2)^{-1}\bigr)$, which is summable. A trajectory can therefore
  follow infinitely many steps of $\M$ with a total error fixed in advance,
  whatever the initial value of $\theta$.
\item \emph{Output.} Output is paid only during compute half-turns and only while
  $a_q$ is within $\frac1{16}$ of the restart label. The factor $\mu$ restricts it
  to amplitudes with $\frac12<\rho^2<\frac32$. Below this window the clock is too
  weak for our error estimates; above it, a single turn could pay arbitrarily much.
\end{itemize}

\subsection*{Cubic relaxation}

\begin{lemma}\label{lem:cubic}
Let $\gamma,z,\varphi$ be continuous on $[t_0,t_1]$ with $\gamma\ge0$, let $y$
solve $\dot y=\gamma(t)\bigl(z(t)-y\bigr)^3+\varphi(t)$, and put
$\Gamma=\int_{t_0}^{t_1}\gamma$ and $\Phi=\int_{t_0}^{t_1}|\varphi|$.
\begin{itemize}
\item[(i)] If $|z(t)-z_*|\le\varrho$ on $[t_0,t_1]$ and $\Gamma>0$, then
  $|y(t_1)-z_*|\le\varrho+(2\Gamma)^{-1/2}+\Phi$.
\item[(ii)] If $z(t)\ge z_*$ on $[t_0,t_1]$ and $\Gamma>0$, then
  $y(t_1)\ge z_*-(2\Gamma)^{-1/2}-\Phi$.
\item[(iii)] $|y(t)|\le\max\{|y(t_0)|,\max|z|\}+\Phi$ on $[t_0,t_1]$; in
  particular $y$ exists on all of $[t_0,t_1]$.
\end{itemize}
The bounds (i) and (ii) do not depend on $y(t_0)$.
\end{lemma}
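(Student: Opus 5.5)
The plan is to reduce everything to a scalar comparison argument. The key is an explicit supersolution of the unforced cubic relaxation toward a constant target. For $\dot e=-\gamma e^3$ with $e(t_0)=e_0$ one has $e(t)^{-2}=e_0^{-2}+2\int_{t_0}^t\gamma$. Hence $|e(t_1)|\le(2\Gamma)^{-1/2}$ regardless of $e_0$, and the solution never changes sign. Forcing and a moving target will be absorbed by a shifted comparison function.

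For (iii), I would use a barrier argument. Put $R(t)=\max\{|y(t_0)|,\max|z|\}+\int_{t_0}^t|\varphi|$. Where $y>\max|z|$, the cubic term is $\le0$, so $\dot y\le|\varphi|$; symmetrically where $y<-\max|z|$. The standard first-exit argument then shows $|y(t)|\le R(t)$. One compares with $R+\delta$ for $\delta>0$ and lets $\delta\to0$, which avoids nonsmoothness at the max. Boundedness plus the continuation criterion quoted in the Conventions gives existence on $[t_0,t_1]$.

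For (ii), the plan is as follows.
\begin{itemize}
\item Define the comparison function $w(t)=z_*-\sigma(t)-\int_{t_0}^t|\varphi|$, where $\sigma$ solves $\dot\sigma=-\gamma\sigma^3$ with $\sigma(t_0)=\sigma_0$ large.
\item Choose $\sigma_0$ with $\sigma_0>z_*-y(t_0)$, so that $y(t_0)>w(t_0)$.
\item Claim that $y>w$ persists. At a first touching time $y=w\le z_*-\sigma\le z-\sigma$, so $z-y\ge\sigma>0$.
\item Then $\dot y\ge\gamma\sigma^3-|\varphi|\ge\dot w=\gamma\sigma^3-|\varphi|$. To get strictness, one perturbs $w$ by $-\delta(1+t-t_0)$ and lets $\delta\to0$.
\item At $t_1$ this gives $y(t_1)\ge z_*-\sigma(t_1)-\Phi\ge z_*-(2\Gamma)^{-1/2}-\Phi$, because $\sigma(t_1)^{-2}\ge2\Gamma$. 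The bound is independent of $\sigma_0$ and hence of $y(t_0)$.
\end{itemize}
One must handle $\sigma$ staying positive: it does, since $\sigma_0>0$ and the explicit formula keeps it positive. If $y(t_0)\ge z_*$, one can take any $\sigma_0>0$.

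For (i), apply (ii) to the lower bound $z\ge z_*-\varrho$, giving $y(t_1)\ge z_*-\varrho-(2\Gamma)^{-1/2}-\Phi$. Then apply the same argument to $-y$, which solves $\dot{(-y)}=\gamma(-z-(-y))^3-\varphi$ with $-z\ge-z_*-\varrho$; this gives the matching upper bound. The cube being odd makes the reflection exact.

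The main obstacle is the comparison step in (ii) with merely continuous $\gamma,\varphi$. There the touching argument needs $\dot y-\dot w>0$ strictly at the contact point, which is what the $\delta$-perturbation is for. The perturbation adds $\delta$ to $\dot w$'s deficit, and it also slightly enlarges $z-y$ at contact, which only helps since the cube is increasing. Everything else is routine.
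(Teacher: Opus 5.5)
Your argument is correct, but it is organized differently from the paper's.

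The paper first strips off the forcing. It compares $y$ with the unforced solution $\tilde y$ from the same initial value and, since the drift $\gamma(z-y)^3$ is nonincreasing in $y$, gets $|y-\tilde y|\le\Phi$. For $\tilde y$ it then uses forward invariance of $[z_*-\varrho,z_*+\varrho]$ and a case split on the excess $w=\tilde y-z_*-\varrho$: either $w$ becomes nonpositive and stays so, or $\frac{d}{dt}w^{-2}\ge2\gamma$ gives $w(t_1)\le(2\Gamma)^{-1/2}$. Part (ii) is the one-sided version, and (iii) comes from invariance of $[-R,R]$.

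You never introduce $\tilde y$. The forcing enters your barrier directly through $-\int_{t_0}^t|\varphi|$, and the decay enters through the explicit Bernoulli solution $\sigma$. Independence from $y(t_0)$ comes from $\sigma(t_1)^{-2}=\sigma_0^{-2}+2\Gamma\ge2\Gamma$ rather than from a case split. You also prove the one-sided (ii) first and recover (i) by the reflection $(y,z,\varphi)\mapsto(-y,-z,-\varphi)$, which is exact because the cube is odd.

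The paper's route is shorter and isolates a reusable structural fact: the unforced flow is non-expansive. The price is an auxiliary solution and a routine argument for the nonsmooth quantity $|y-\tilde y|$. Your route needs no invariance claims and reduces everything to a pointwise comparison of $C^1$ functions. It works for merely continuous $\gamma,\varphi$, with the $\delta(1+t-t_0)$ perturbation supplying strictness at contact.

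One small remark on (iii). With the constant shift $R+\delta$, the derivative comparison at the contact time is only $\dot y\le\dot R$, not strict. The contradiction must therefore come from integrating $\dot y\le|\varphi|=\dot R$ over a short interval before contact, where the shift guarantees $y>\max|z|$. Alternatively, integrate over the whole excursion of $y$ above $R$, where no $\delta$ is needed.
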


\begin{proof}
Let $\tilde y$ solve the unforced equation with $\tilde y(t_0)=y(t_0)$. The drift
$\gamma(z-y)^3$ is nonincreasing in $y$, so $|y-\tilde y|$ grows at most at rate
$|\varphi|$, and $|y-\tilde y|\le\Phi$. It remains to treat $\varphi=0$. For (i),
the interval $[z_*-\varrho,z_*+\varrho]$ is forward invariant, since the drift
points inwards at its ends. Above it, the excess $w=\tilde y-z_*-\varrho>0$
satisfies $\dot w\le-\gamma w^3$, that is,
$\frac{d}{dt}w^{-2}\ge2\gamma$. So either $w$ becomes nonpositive and stays so, or
$w(t_1)^{-2}\ge2\Gamma$. The same argument applies below the interval. Part (ii)
is the one-sided version, and (iii) follows from the invariance of
$[-R,R]$, $R=\max\{|y(t_0)|,\max|z|\}$.
\end{proof}

\subsection*{One turn of the clock}

If $\rho^2\notin(\frac12,\frac32)$ the output vanishes identically, so assume
$\frac12<\rho^2<\frac32$. A \emph{turn} is a time interval $[t_j,t_j+2\pi]$
starting when $\psi\equiv-\frac\pi2\pmod{2\pi}$. Its first half is the compute
half-turn and its second half the copy half-turn. On the middle third of each
half-turn ($|\psi|\le\frac\pi3$, respectively $|\psi-\pi|\le\frac\pi3$) the active
weight $\pos{\pm u}$ is at least $\rho/2>\frac13$. For such a middle third $I$, of
length $\frac{2\pi}3$, the rate $\gamma=\kappa(\theta)\pos{\pm u}$ satisfies
$2\int_I\gamma\ge\frac{4\pi}{9}K\bigl(1+\dist(0,\theta(I))^2\bigr)^2$. As
$\frac{4\pi}9>1$, Lemma~\ref{lem:cubic} applies on the half-turn with
\begin{equation}\label{eq:eta}
  (2\Gamma)^{-1/2}\le\eta(I):=K^{-1/2}\bigl(1+\dist(0,\theta(I))^2\bigr)^{-1}.
\end{equation}

\begin{lemma}\label{lem:summable}
Along any forward trajectory, the sum of $\eta(I)$ over all middle thirds is less
than $3K^{-1/2}$, whatever the initial clock state.
\end{lemma}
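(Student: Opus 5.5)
The plan is to use only that $\dot\theta=1$ and that the middle thirds are time intervals of length $\frac{2\pi}3$ that are pairwise disjoint and spaced periodically. Along a forward trajectory the clock angle satisfies $\psi(t)=\psi(0)+t$. The middle thirds are the intervals on which $\psi$ lies within $\frac\pi3$ of a multiple of $\pi$. They therefore occur once per half-turn, and consecutive ones start exactly $\pi$ apart in time. Since $\theta(t)=\theta(0)+t$, the sets $\theta(I)$ are intervals $[\alpha_j,\alpha_j+\frac{2\pi}3]$ with $\alpha_{j+1}=\alpha_j+\pi$, where $j$ ranges over an index set starting at the first complete middle third. A partial middle third at time $0$ can be handled either by counting it as a full interval or by noting that $\eta$ only increases when $I$ shrinks, since $\dist(0,\theta(I))$ can only grow; either way it is bounded by the term of a full interval. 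In every case each term satisfies $\eta(I)\le K^{-1/2}$.

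The key step is to bound
\[
\sum_j\bigl(1+\dist(0,[\alpha_j,\alpha_j+\tfrac{2\pi}3])^2\bigr)^{-1}
\]
uniformly in $\alpha_0$. The intervals are disjoint with gaps $\frac\pi3$, so at most one of them meets $0$, contributing at most $1$. The intervals lying entirely in $(0,\infty)$ have left endpoints $\alpha_j>0$ spaced by $\pi$, hence $\alpha_j\ge i\pi$ for the $i$-th such interval, where $i=0,1,\dots$. Symmetrically, the intervals lying in $(-\infty,0)$ have right endpoints $\le -i\pi$.

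Comparing with $\sum_{i\ge0}(1+i^2\pi^2)^{-1}$, each side contributes at most
\[
1+\sum_{i\ge1}\frac1{\pi^2 i^2}=1+\tfrac16 .
\]
This crude bound would give $1+2\cdot\frac76>3$, which is too much. I will therefore refine it by using the gaps. The interval meeting $0$ (if any) contributes $1$. The nearest positive interval then has left endpoint at least $\frac\pi3$, and the $i$-th has left endpoint at least $\frac\pi3+(i-1)\pi$; likewise on the negative side. The total is then at most
\[
1+2\sum_{i\ge1}\frac1{1+((i-\tfrac23)\pi)^2}.
\]
Its leading terms are $1/(1+\pi^2/9)\approx0.48$, $1/(1+16\pi^2/9)\approx0.054$, and so on, giving a total around $2.1<3$. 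If no interval meets $0$, the two intervals adjacent to $0$ are separated by a gap of length $\frac\pi3$, so their distances $\delta_1,\delta_2$ satisfy $\delta_1+\delta_2=\frac\pi3$. Bounding each of these two terms by $1$, and the remaining terms by those of $\sum_{i\ge1}(1+\pi^2 i^2/9)^{-1}$ or a similar series, again gives a total below $3$. Multiplying by $K^{-1/2}$ yields the claim.

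The main obstacle is this bookkeeping of the numerical constant: a careless comparison exceeds $3$. I would first try the split into \emph{the interval nearest to $0$ on each side} plus tails bounded by $\sum 1/(1+\pi^2 i^2)\le\frac16$. In the worst case $\alpha$ is placed so that two intervals nearly touch $0$, and then the sum is at most $2+2\cdot\frac16<3$. This suffices, since any interval other than the two nearest to $0$ lies at distance at least $\frac\pi3+(i-1)\pi\ge\pi i/3$ beyond them. I will also check that only forward time is used and that the bound holds for every $\theta(0)\in\R$; both are built into the argument, since the estimate depends only on the arithmetic progression of the $\alpha_j$.
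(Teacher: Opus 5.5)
Your argument is correct in substance but organized differently from the paper's. The paper avoids any case distinction. It sorts the middle thirds by the sign of their $\theta$-centre $c$ and uses $\dist(0,\theta(I))\ge|c|-\frac\pi3$. It then observes that the $i$-th centre on each side satisfies $|c|\ge i\pi$, and bounds $i\pi-\frac\pi3\ge2i$ for $i\ge1$. An interval containing $0$ is simply the $i=0$ term on one side, so only two terms are estimated by $K^{-1/2}$, and the total is below $2K^{-1/2}\bigl(1+\frac{\pi^2}{24}\bigr)<3K^{-1/2}$. You instead distinguish whether some $\theta(I)$ contains $0$ and use the gaps of length $\frac\pi3$ directly. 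This costs a case split but, done correctly, gives the better constant $\frac73K^{-1/2}$ in both cases. Your treatment of a partial middle third at time $0$ via monotonicity of $\eta$ is fine.

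The bookkeeping in your second case is, however, wrong as written. The series you name there, $\sum_{i\ge1}(1+\pi^2i^2/9)^{-1}$, has sum about $1.007$. Together with the two adjacent terms bounded by $1$, it already exceeds $3$, even if used only once rather than on both sides. Your closing justification (distance at least $\frac\pi3+(i-1)\pi\ge\frac{\pi i}{3}$) supports only that weak series, not the tail $\sum_{i\ge1}(1+\pi^2i^2)^{-1}\le\frac16$ you then invoke. Likewise, the blanket bound $2+2\cdot\frac16$ is not justified when an interval contains $0$, since the next interval on that side may be at distance only $\frac\pi3$.

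The repair is already in your second paragraph. If no $\theta(I)$ contains $0$, every interval lies entirely in $(0,\infty)$ or in $(-\infty,0)$, and the $i$-th one on each side is at distance at least $i\pi$. Your ``crude'' estimate without the extra $1$ then gives $2\bigl(1+\frac16\bigr)=\frac73$. In the first case, make the tail rigorous via $(1+\pi^2/9)^{-1}<\frac12$ and $(i-\frac23)\pi\ge(i-1)\pi$ for $i\ge2$, which gives $1+2\bigl(\frac12+\frac16\bigr)=\frac73$. With these two lines the proof is complete.
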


\begin{proof}
Consecutive middle thirds are centred at times $\pi$ apart, hence at values of
$\theta$ forming a progression $c_0+\pi\N$, and $\dist(0,\theta(I))\ge|c|-\frac\pi3$
for the centre $c$ of $I$. Among the nonnegative centres the $i$-th smallest
($i=0,1,\dots$) is at least $i\pi$, and likewise for the negative ones. Since
$i\pi-\frac\pi3\ge2i$ for $i\ge1$,
\[
  \sum_I\eta(I)\le2K^{-1/2}\Bigl(1+\sum_{i\ge1}\frac1{1+4i^2}\Bigr)
  <2K^{-1/2}\Bigl(1+\frac{\pi^2}{24}\Bigr)<3K^{-1/2}. \qedhere
\]
\end{proof}

This is where the clock $\theta$ earns its place: the gain grows along every
trajectory, and since $\theta$ passes near $0$ at most once, the bound is uniform
in $\theta(0)$. For a complete turn $j$ write $a_j=a(t_j)$, let $h_j$ be the
budget component of $a_j$, and put
\[
  E_j=\int_{t_j}^{t_j+2\pi}\nrm{\varphi(t)}\,dt,\qquad
  \varepsilon_j=2E_j+\eta(I_j^{c})+\eta(I_j^{p}),
\]
where $I_j^c$ and $I_j^p$ are the middle thirds of the compute and copy
half-turns.

\begin{lemma}[One turn]\label{lem:turn}
For every complete turn $j$ the following hold.
\begin{itemize}
\item[(i)] If $F$ is $1$-Lipschitz for $\nrm{\cdot}$ and $H(a(t))=F(a(t))$
  throughout the compute half-turn, then $\nrm{a_{j+1}-F(a_j)}\le\varepsilon_j$.
\item[(ii)] In any case, $h_{j+1}\ge h_j-\varepsilon_j$.
\item[(iii)] If $E_j\le\frac1{16}$ and the output is positive somewhere in turn
  $j$, then $\nrm{a_{j+1}-M_0(a_j)}\le\varepsilon_j$. In particular
  $h_{j+1}\ge\pos{h_j}+1-\varepsilon_j$.
\end{itemize}
\end{lemma}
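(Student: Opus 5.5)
The plan is to follow one turn half by half, using Lemma~\ref{lem:cubic} componentwise on each half-turn, with forcing integral at most $E_j$ per half. During the compute half-turn $[t_j,t_j+\pi]$ we have $u\ge0$, so $\dot a=\varphi_a$. Hence $\nrm{a(t)-a_j}\le E_j^{(c)}$ throughout, where $E_j^{(c)}$ denotes the forcing integral over that half. Each $b_i$ obeys $\dot b_i=\gamma(H_i(a)-b_i)^3+\varphi_{b,i}$ with $\gamma=\kappa(\theta)\pos u$. By \eqref{eq:eta}, the integral of $\gamma$ over the middle third alone already gives $(2\Gamma)^{-1/2}\le\eta(I_j^c)$, and restricting $\gamma$ to a subinterval only enlarges the bound. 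During the copy half-turn $b$ moves only by forcing, and $a_i$ relaxes cubically towards $b_i$ with $(2\Gamma)^{-1/2}\le\eta(I_j^p)$.

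\emph{(i).} On the compute half, $z=H(a(t))=F(a(t))$ lies within $E_j^{(c)}$ of $F(a_j)$, because $F$ is $1$-Lipschitz. Lemma~\ref{lem:cubic}(i) then gives
\[
\nrm{b(t_j+\pi)-F(a_j)}\le 2E_j^{(c)}+\eta(I_j^c).
\]
On the copy half, $z=b(t)$ stays within $E_j^{(p)}$ of $b(t_j+\pi)$. Lemma~\ref{lem:cubic}(i) again gives
\[
\nrm{a_{j+1}-b(t_j+\pi)}\le 2E_j^{(p)}+\eta(I_j^p).
\]
Adding the two estimates yields $\varepsilon_j$, since $E_j^{(c)}+E_j^{(p)}=E_j$.

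\emph{(ii).} We argue the same way with one-sided bounds. By Lemma~\ref{lem:H}(a), $H_h(a(t))\ge h(t)\ge h_j-E_j^{(c)}$, since $a_h$ moves only by forcing. Lemma~\ref{lem:cubic}(ii) gives $b_h(t_j+\pi)\ge h_j-2E_j^{(c)}-\eta(I_j^c)$. On the copy half $b_h$ drops by at most $E_j^{(p)}$, and a second application of (ii) gives the claim.

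\emph{(iii).} This is the delicate part. Output positive somewhere in turn $j$ forces $\pos u>0$ and $\chi(4a_q)>0$ at some time $t$ in the compute half, so $|a_q(t)|<\frac1{16}$. Since $a$ moves only by forcing on that half, $|a_q|\le\frac1{16}+E_j\le\frac18$ throughout the compute half-turn. Lemma~\ref{lem:H}(b) then gives $H=M_0$ there, and $M_0$ is $1$-Lipschitz, so part (i) applies with $F=M_0$. The last claim follows from $(M_0(a_j))_h=\pos{h_j}+1$.

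The main obstacle is the fiddly point that the output could be positive only in the copy half. This does not occur, because the factor $\pos u$ vanishes there. The remaining care is in splitting $E_j$ between the two halves and checking that the threshold $E_j\le\frac1{16}$ suffices, which it does since $\frac1{16}+\frac1{16}=\frac18$.
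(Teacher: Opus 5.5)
Your proof is correct and follows the paper's argument essentially step by step. Part (i) uses componentwise cubic relaxation on each half-turn. Part (ii) uses its one-sided version together with Lemma~\ref{lem:H}(a). Part (iii) localizes $|a_q|\le\frac18$ on the compute half-turn and applies Lemma~\ref{lem:H}(b) to reduce to (i) with $F=M_0$.

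The only cosmetic difference is in (i). You pass through the intermediate point $b(t_j+\pi)$ and use the triangle inequality. The paper instead applies Lemma~\ref{lem:cubic}(i) on the copy half-turn directly around $F(a_j)$ with radius $2E^c+\eta(I^c)+E^p$. Both routes yield exactly $\varepsilon_j$.
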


\begin{proof}
Let $E^c$ and $E^p$ be the forcing masses of the two half-turns.

\emph{(i)} During the compute half-turn $a$ moves only by forcing, so
$\nrm{a(t)-a_j}\le E^c$ and hence $\nrm{H(a(t))-F(a_j)}\le E^c$.
Lemma~\ref{lem:cubic}(i), applied to each component of $b$ with target
$z_*=F_i(a_j)$, gives $\nrm{b-F(a_j)}\le2E^c+\eta(I^c)$ at the end of the
half-turn. During the copy half-turn $b$ moves only by forcing and stays within
$2E^c+\eta(I^c)+E^p$ of $F(a_j)$. Lemma~\ref{lem:cubic}(i) for $a$ then gives
$\nrm{a_{j+1}-F(a_j)}\le2E^c+\eta(I^c)+2E^p+\eta(I^p)=\varepsilon_j$.

\emph{(ii)} Argue in the same way with Lemma~\ref{lem:cubic}(ii), using
$H_h(a(t))\ge h(t)\ge h_j-E^c$ from Lemma~\ref{lem:H}(a).

\emph{(iii)} Output is paid only in the compute half-turn, and only where
$|a_q|<\frac1{16}$. Since $a_q$ moves by at most $E^c\le\frac1{16}$ during that
half-turn, $|a_q|<\frac18$ throughout it, and $H=M_0$ there by
Lemma~\ref{lem:H}(b). Apply (i) with $F=M_0$.
\end{proof}

\subsection*{Tracking an honest computation}

Fix the tolerance $\delta=2^{-8}$ and set $\Delta=2\delta+3K^{-1/2}=11\cdot2^{-10}$.
By Lemma~\ref{lem:summable}, on any time interval with forcing mass at most
$\delta$,
\begin{equation}\label{eq:budget}
  \sum_j\varepsilon_j\le\Delta,\qquad \Delta+\delta<\tfrac1{32}.
\end{equation}

\begin{lemma}[Tracking]\label{lem:track}
Consider a time interval with forcing mass at most $\delta$, containing the
complete turns $j_0,j_0+1,\dots$, and regular states $a^*_0,a^*_1,\dots$ with
$a^*_{i+1}=\M(a^*_i)$. If $\nrm{a_{j_0}-a^*_0}+\sum_{j\ge j_0}\varepsilon_j\le\Delta$,
then throughout the compute half-turn of turn $j_0+i$ we have
$\nrm{a(t)-a^*_i}<\frac1{32}$. Consequently the output vanishes during turn
$j_0+i$ if the label of $a^*_i$ is nonzero, and equals $\mu(\rho^2)\pos u$ during
its compute half-turn if that label is $0$.
\end{lemma}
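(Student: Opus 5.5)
We argue by induction on $i$, proving the invariant
\[
  \nrm{a_{j_0+i}-a^*_i}\le\Delta_i:=\nrm{a_{j_0}-a^*_0}+\sum_{j_0\le j<j_0+i}\varepsilon_j\le\Delta ,
\]
and then derive the claim about compute half-turns from it. The base case $i=0$ holds by definition.

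For the inductive step, suppose $\nrm{a_{j_0+i}-a^*_i}\le\Delta_i\le\Delta$. During the compute half-turn of turn $j_0+i$ the copy $a$ moves only through forcing. Its displacement is therefore at most the forcing mass of the interval, which is at most $\delta$. Hence throughout that half-turn
\[
  \nrm{a(t)-a^*_i}\le\Delta+\delta<\tfrac1{32}
\]
by \eqref{eq:budget}. This already gives the first assertion for index $i$.

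Since $\frac1{32}<\frac18$, Lemma~\ref{lem:H}(c) applies to the regular state $a^*_i$. It gives a $1$-Lipschitz map $F_{a^*_i}$ with $H=F_{a^*_i}$ along $a(t)$ during the whole compute half-turn and $F_{a^*_i}(a^*_i)=\M(a^*_i)=a^*_{i+1}$. Lemma~\ref{lem:turn}(i) then yields $\nrm{a_{j_0+i+1}-F_{a^*_i}(a_{j_0+i})}\le\varepsilon_{j_0+i}$. Combining this with the Lipschitz property,
\[
  \nrm{a_{j_0+i+1}-a^*_{i+1}}\le\nrm{a_{j_0+i}-a^*_i}+\varepsilon_{j_0+i}\le\Delta_{i+1}\le\Delta ,
\]
which closes the induction. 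No smallness of $E_j$ beyond the total mass $\delta$ is needed, because Lemma~\ref{lem:turn}(i) carries no such hypothesis.

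For the output, output is paid only in compute half-turns, since $\pos u=0$ in copy half-turns. It remains to read off $\chi(4a_q)$ there. Suppose the label $q^*$ of $a^*_i$ is a nonzero integer. Then $|a_q|\ge1-\frac1{32}$, so $|4a_q|\ge\frac14$ and $\chi(4a_q)=0$. Suppose instead $q^*=0$. Then $|4a_q|<\frac18$, so $\chi(4a_q)=1$ and the output equals $\mu(\rho^2)\pos u$.

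The main point requiring care is that the tracking bound must hold on the \emph{whole} compute half-turn, not just at turn endpoints. This is exactly what allows Lemma~\ref{lem:H}(c) to replace $H$ by a $1$-Lipschitz map along the trajectory. It is handled by the margin $\Delta+\delta<\frac1{32}$ together with the observation that $a$ is frozen up to forcing during compute half-turns.
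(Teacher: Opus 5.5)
Your proof is correct and follows the paper's argument essentially verbatim. You induct on the turn-boundary error $\nrm{a_{j_0+i}-a^*_i}$, use the margin $\Delta+\delta<\frac1{32}$ to keep $a(t)$ in the ball of Lemma~\ref{lem:H}(c) throughout each compute half-turn, apply Lemma~\ref{lem:turn}(i) with the $1$-Lipschitz map $F_{a^*_i}$ for the step, and read off $\chi(4a_q)$ for the output. Your extra remarks (that $\pos u=0$ on copy half-turns, and that Lemma~\ref{lem:turn}(i) needs no bound on $E_j$) only make explicit what the paper leaves implicit.
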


\begin{proof}
We show $\nrm{a_{j_0+i}-a^*_i}\le\nrm{a_{j_0}-a^*_0}+\sum_{j_0\le j<j_0+i}\varepsilon_j
\le\Delta$ by induction. During the compute half-turn $a$ moves by at most
$\delta$, so it stays within $\Delta+\delta<\frac1{32}$ of $a^*_i$, where
$H=F_{a^*_i}$ by Lemma~\ref{lem:H}(c). Lemma~\ref{lem:turn}(i), together with
$F_{a^*_i}(a^*_i)=a^*_{i+1}$ and the Lipschitz bound, gives the induction step.
For the output, note that $\chi(4a_q)=1$ if $|a_q|\le\frac1{32}$ and
$\chi(4a_q)=0$ if $a_q$ is within $\frac1{32}$ of a label $\ge1$.
\end{proof}

Errors add up but are never amplified, because the local maps $F_{a^*}$ are
non-expansive. This is why the machine was built from translations and restarts
only, and why a summable error sequence suffices to follow infinitely many steps.

\subsection*{The two properties of the monitor}

\begin{proposition}[Uniform bound when $U$ halts]\label{prop:halt}
Suppose $U$ halts on $n$ after $T$ steps, and set $A_n=3(T+3)$. Then every
solution of~\eqref{eq:monitor} on a bounded interval $[t_0,t_1]$, from any
initial state and with any continuous forcing, satisfies
\begin{equation}\label{eq:robust}
  \int_{t_0}^{t_1}o(x(t))\,dt\le A_n+\frac{A_n}{\delta}\int_{t_0}^{t_1}\nrm{\varphi(t)}\,dt .
\end{equation}
\end{proposition}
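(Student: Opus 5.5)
We first dispose of the trivial cases. If $\rho^2\notin(\frac12,\frac32)$ the output vanishes identically. Otherwise, during each turn the output is at most $\mu\cdot\pos u\le 1$ integrated over a compute half-turn, so every turn, complete or partial, contributes at most $2$ to $\int o$.

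We then split $[t_0,t_1]$ into consecutive \emph{windows}, each a maximal union of whole turns whose forcing mass is at most $\delta$. The split is greedy: a window is closed as soon as adding the next turn would push its mass above $\delta$. Each turn that breaks a window carries, together with the window it closes, forcing mass more than $\delta$. Hence the number of windows is at most $1+\delta^{-1}\int\nrm{\varphi}$ up to a factor $2$, plus the single turns that themselves have mass $>\delta$; those also number at most $\delta^{-1}\int\nrm\varphi$. We add the two partial turns at the ends. It therefore suffices to show that each window with mass $\le\delta$ carries output at most about $A_n/2$, or at most $3(T+3)$ after tuning the constants. Isolated heavy turns cost at most $2$ each. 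This yields \eqref{eq:robust} with constants $A_n$ and $A_n/\delta$.

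Inside a window with mass $\le\delta$, \eqref{eq:budget} gives $\sum_j\varepsilon_j\le\Delta<\frac1{32}$. Call a turn \emph{paying} if the output is positive somewhere in it. We show that a window contains at most about $T+2$ paying turns.

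\emph{Budget growth.} By Lemma~\ref{lem:turn}(iii), a paying turn $j$ yields $h_{j+1}\ge\pos{h_j}+1-\varepsilon_j$. By Lemma~\ref{lem:turn}(ii), every other turn loses at most $\varepsilon_j$. Summing over the window, after the $p$-th paying turn the budget satisfies $h\ge p-\Delta$, in fact $\ge p-1-\Delta$ measured from the first paying turn. Since the $\pos{\cdot}$ wipes out a possibly very negative starting budget, this holds uniformly in the initial state.

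\emph{Honest run after a paying turn.} Also by Lemma~\ref{lem:turn}(iii), after a paying turn $j$ the state $a_{j+1}$ lies within $\varepsilon_j$ of $M_0(a_j)=(h^*,h^*,r_n,1)$ with $h^*=\pos{h_j}+1$ real. This target is regular. Its $\M$-orbit runs $U$ from $r_n$ with countdown $h^*$, where $h^*$ is real but the countdown test only sees $c\le0$ versus $c\ge1$. Shifting $h^*$ by a non-integer changes which steps count; to keep the orbit regular we compare instead with the integer-shifted regular state, or we note that $c^*=h^*-i$ stays in $(-\infty,0]\cup[1,\infty)$ only if $h^*$ is an integer. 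Therefore we pick $a^*_0$ with $h^*$ rounded: since $h_{j}$ is within $\Delta$ of a value that is an integer or has $\pos{\cdot}=0$, we take $a^*_0$ within $\Delta$ of $a_{j+1}$ with integer budget. Establishing this integrality near paying turns by induction within the window is the fiddly step.

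Lemma~\ref{lem:track} then applies with total error $\le\Delta$. If the budget is $\ge T$, the tracked orbit halts, its labels stay nonzero forever, and no further output is paid in the window. If the budget is $<T$, the orbit returns to label $0$ after finitely many steps, producing exactly the next paying turn, and the budget rises by one. Hence after at most $T+1$ paying turns past the first, the budget reaches $T$ and payment stops. This gives at most $T+2$ paying turns per window, each paying at most $2$, plus boundary turns. The total is at most $2(T+3)\le A_n$ per window, which completes the count.

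\emph{Main obstacle.} The hardest part is making the tracked reference states regular despite real-valued budgets: after the first paying turn the budget equals $\pos{h}+1$, an arbitrary real number, and fractional countdowns near $(0,1)$ break Lemma~\ref{lem:H}(c). We would first try to avoid integrality altogether. Once the budget is $\ge T+1$ the computation halts before the countdown test matters, and the countdown $c=h^*-i$ stays $\ge1$ throughout the $T$ steps. So we only need regularity for the runs with budget $\ge T+1$. For the earlier runs we simply count them, at most $T+2$ of them by budget growth, without needing to track them.
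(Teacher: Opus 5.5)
Your closing paragraph has the right argument, and it is the paper's. Use budget growth only to count paying turns, and track only the run launched by the $(T+1)$-st paying turn $j'$. Its reference state $M_0(a_{j'})=(\bar h,\bar h,r_n,1)$ has $\bar h=\pos{h_{j'}}+1\ge T+1-\Delta>T$. So the countdown stays above $1$ during the $T$ steps of $U$, and the label is $S+1$ afterwards. The whole $\M$-orbit is therefore regular, and Lemma~\ref{lem:track}, applied from turn $j'+1$ with initial error $\varepsilon_{j'}$, shows that no later turn pays. This gives at most $T+1$ paying turns in any stretch of forcing mass at most $\delta$. Two other parts of the proposal, however, do not work as written.

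\emph{The integrality thread must be deleted, not ``established by induction''; its key claim is false.} The initial budget is an arbitrary real, and $M_0$ only adds $1$ to $\pos h$. Start from $h=\frac12$: the first paying turn aims at $h^*=\frac32$. When $T\ge2$, its $\M$-orbit reaches countdown $\frac12\in(0,1)$ at an ordinary label after one step, so it is not regular. Rounding $h^*$ moves the reference by $\frac12>\Delta$, which violates the hypothesis of Lemma~\ref{lem:track}. Hence your claim that a short run ``returns to label $0$ and produces exactly the next paying turn'' cannot be proved this way. It is also never needed.

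\emph{Your turn-aligned greedy windows do not give the stated constants.} You have at most about $1+2\delta^{-1}\int\nrm{\varphi}$ windows and charge $2(T+2)$ per window. The coefficient of the forcing mass then comes out near $4(T+2)/\delta$, which exceeds $A_n/\delta=3(T+3)/\delta$ once $T\ge2$. Since $A_n$ is prescribed, there are no constants to tune.

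The fix is the paper's splitting: cut $[t_0,t_1]$ at the times where $\int_{t_0}^t\nrm{\varphi}$ reaches $\delta,2\delta,\dots$. This gives at most $1+\delta^{-1}\int\nrm{\varphi}$ pieces, each of mass at most $\delta$. The cuts need not respect turns, because the per-piece count holds from any initial state and already allows two partial turns. Each piece then pays at most $\frac52(T+3)\le A_n$, and summing gives exactly \eqref{eq:robust}. With your per-turn bound the piece bound improves to $2(T+1)+4=2(T+3)$. That bound of $2$ is correct, but the justification is $\int\pos u=2\rho$ over a compute half-turn together with $\rho\,\mu(\rho^2)\le1$, not a pointwise bound $1$ integrated over a half-turn.
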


\begin{proof}
\emph{Step 1: small forcing suffices.} Suppose~\eqref{eq:robust} is known when
the forcing mass is at most $\delta$. In general, split $[t_0,t_1]$ into at most
$1+\delta^{-1}\int\nrm{\varphi}$ subintervals of mass at most $\delta$ and apply
the bound on each. This is legitimate precisely because the bound holds for every
initial state. So assume the mass is at most $\delta$.

\emph{Step 2: counting.} If $\rho^2\notin(\frac12,\frac32)$ there is no output.
Otherwise $o\le\pos u\le\rho$, and each turn pays at most $\int\pos u=2\rho<\frac52$.
The interval consists of complete turns and at most two partial ones. Call a
complete turn \emph{productive} if it pays positive output. It suffices to show
that there are at most $T+1$ productive turns, for then
$\int o\le\frac52(T+3)\le A_n$.

\emph{Step 3: every payment raises the budget.} By Lemma~\ref{lem:turn}, a
productive turn gives $h_{j+1}\ge\max\{h_j+1,1\}-\varepsilon_j$, and any other turn
gives $h_{j+1}\ge h_j-\varepsilon_j$. Summing with~\eqref{eq:budget}: after the
$p$-th productive turn, $h\ge p-\Delta$ at every later turn boundary.

\emph{Step 4: after $T+1$ payments nothing more is paid.} Let $j'$ be the
$(T+1)$-st productive turn, if there is one. Then $h_{j'}\ge T-\Delta$, and by
Lemma~\ref{lem:turn}(iii) the state $a_{j'+1}$ lies within $\varepsilon_{j'}$ of
$a_0^*:=M_0(a_{j'})=(\bar h,\bar h,r_n,1)$ with $\bar h=\pos{h_{j'}}+1>T$. From
$a^*_0$ the machine $\M$ runs $U$ on $n$ with budget $\bar h$: during its $T$ steps
the remaining budget is at least $\bar h-T+1\ge1$, and afterwards the label is
$S+1$ forever. All these states are regular. By Lemma~\ref{lem:track}, applied
from turn $j'+1$ with initial error $\varepsilon_{j'}$, no later turn pays output.
\end{proof}

\begin{proposition}[A divergent trajectory when $U$ does not halt]\label{prop:nonhalt}
Suppose $U$ does not halt on $n$, and let $x_n^*=(0,0,-1,a_n^*,a_n^*)$ with
$a_n^*=(0,0,r_n,0)$. For every continuous forcing on $[0,\infty)$ of total mass at
most $\delta$, the solution from $x_n^*$ exists for all $t\ge0$ and
$\int_0^\infty o(x(t))\,dt=\infty$.
\end{proposition}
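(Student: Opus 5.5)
First I establish global existence. The clock components satisfy $\theta(t)=t$ and $(u,v)=(\cos(t-\tfrac\pi2),\sin(t-\tfrac\pi2))$ from $(u,v)(0)=(0,-1)$, so $\rho=1$ and $\mu(\rho^2)=1$. Moreover $\psi(0)=-\frac\pi2$, so the turns are exactly $[2\pi j,2\pi(j+1)]$ for $j\ge0$. For $a$ and $b$ I use Lemma~\ref{lem:cubic}(iii) componentwise. On any bounded interval, $a$ stays bounded because its target $b$ is bounded. Also $H$ is continuous, so $H(a)$ is bounded whenever $a$ is. A bootstrap on a maximal interval then rules out blow-up: either by a Gronwall-type comparison of $\max\{\nrm a,\nrm b\}$, or simply because $H$ has at most linear growth, since each $M_i$ is affine in $(h,c,r)$ up to bounded factors. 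Hence the solution exists on $[0,\infty)$.

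Next I apply Lemma~\ref{lem:track} once, on the whole interval $[0,\infty)$. Its forcing mass is at most $\delta$, and turn $j_0=0$ starts at $a(0)=a_n^*$. I take the reference sequence $a^*_0=a_n^*$ and $a^*_{i+1}=\M(a^*_i)$. These states are regular. Labels stay in $\{0,\dots,S+1\}$ and registers stay in $\N^k$. The countdown is always an integer, so it lies in $(-\infty,0]\cup[1,\infty)$; budgets are real, which is allowed. The hypothesis reads $\nrm{a_0-a^*_0}+\sum_j\varepsilon_j\le 0+\Delta$, and this follows from \eqref{eq:budget}. So Lemma~\ref{lem:track} gives that whenever $a^*_i$ has label $0$, the output during the compute half-turn of turn $i$ equals $\mu(1)\pos u=\pos u$. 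Its integral over that half-turn is $\int\pos{\cos}=2$.

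It remains to show that $\M$, started from $(0,0,r_n,0)$, visits label $0$ infinitely often. This is \eqref{eq:restarts} in the non-halting direction, and I spell it out. After the $j$-th restart the state is $(j,j,r_n,1)$. $\M$ then runs $U$ for $j$ steps, because $U$ never halts, so the label stays in $1..S$. When the countdown reaches $0$, the machine moves to label $0$ again. Hence infinitely many indices $i$ have label $0$, each contributes $2$, and $\int_0^\infty o=\infty$. The output is nonnegative, which justifies summing these contributions.

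The main obstacle is that Lemma~\ref{lem:track} is stated for an interval containing turns $j_0,j_0+1,\dots$ with a sum over all of them. I need to check that it applies to the infinite interval, which it does since $\sum_j\varepsilon_j\le\Delta$ uniformly. The other points are regularity of the reference states and the existence argument above.
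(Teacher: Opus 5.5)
Your argument is correct and is essentially the paper's proof: zero initial error at the integer (hence regular) $\M$-orbit of $a_n^*$, Lemma~\ref{lem:track} applied on all of $[0,\infty)$ via \eqref{eq:budget}, infinitely many restart labels by \eqref{eq:restarts}, and $\mu(1)\int\pos u=2$ for each of them. The one soft spot is global existence, which the paper takes from Lemma~\ref{lem:noblowup}: on each half-turn one of $a,b$ moves only by forcing, so Lemma~\ref{lem:cubic}(iii) bounds the other; your bootstrap should rest on this alternation (or on the slope-one bound $\nrm{H(a)}\le\nrm a+C$), since linear growth of $H$ alone would not exclude blow-up if the two cubic relaxations acted at the same time.
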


\begin{proof}
Global existence follows from Lemma~\ref{lem:noblowup} below. The initial clock
has $\rho=1$ and $\psi=-\frac\pi2$, so $t=0$ starts a turn. The $\M$-orbit of
$a_n^*$ consists of integer, hence regular, states and visits the restart label
infinitely often by~\eqref{eq:restarts}. By Lemma~\ref{lem:track}, with zero
initial error, every turn whose reference label is $0$ pays $\mu(1)\int\pos u=2$.
\end{proof}

\begin{lemma}[No blow-up under integrable forcing]\label{lem:noblowup}
A solution of~\eqref{eq:monitor} on $[0,t_1)$, $t_1<\infty$, with
$\int_0^{t_1}\nrm{\varphi}<\infty$ is bounded there. Hence a maximal solution
cannot end at a finite time before which its forcing mass is finite.
\end{lemma}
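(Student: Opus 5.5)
The plan is to bound each block of the state $x=(\theta,u,v,a,b)$ in turn, following the triangular structure of \eqref{eq:monitor}.

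\emph{Clock.} The clock variables are explicit. We have $\theta(t)=\theta(0)+t$, and $(u,v)$ rotates with constant amplitude $\rho$. So on $[0,t_1)$ these stay bounded, and $\kappa(\theta)$ and $\pos{\pm u}$ are continuous and bounded there.

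\emph{Copies $a,b$.} The coupled pair is handled by the componentwise a priori bound of Lemma~\ref{lem:cubic}(iii), combined with a linear growth bound on $H$. Each component $b_i$ satisfies a cubic relaxation with rate $\gamma=\kappa(\theta)\pos u\ge0$, target $z=H_i(a(t))$ and forcing $\varphi_{b,i}$. Likewise $a_i$ has target $b_i$ and forcing $\varphi_{a,i}$.

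Since $H$ is built from rational constants, $+$, $\times$ and $|\cdot|$, and the only products are of a bounded factor ($\chi$, $Z$, $\sat$, all in $[0,1]$) with affine expressions, one checks from \eqref{eq:H} that
\[
\nrm{H(a)}\le \nrm{a}+C_0
\]
for a constant $C_0$ depending only on $U$ and $n$. For instance $M_0$ gives $\pos h+1$, the labels are bounded by $S+1$, and the registers move by at most $1$.

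Now set $R(t)=\max\{\nrm{a(t)},\nrm{b(t)}\}$. The monotonicity argument behind Lemma~\ref{lem:cubic}(iii) shows that, away from the forcing, no component can exceed in absolute value the larger of its current value and the current target value. To make this rigorous, I would compare with $\tilde R(t)=R(0)+C_0t+\int_0^t\nrm{\varphi}$. I would show that the box $\{\nrm{a},\nrm{b}\le\tilde R(t)\}$ is forward invariant in the usual tangency sense, because at a face where, say, $b_i=\tilde R$, the drift $\gamma(H_i(a)-b_i)^3$ is at most $\gamma(C_0)^3$. That bound is awkward, so I would instead do the following, which is cleaner.

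Work in phases. During a compute half-turn $a$ is frozen up to forcing, so Lemma~\ref{lem:cubic}(iii) gives, over that half-turn,
\[
\nrm b\le \max\{\nrm{b_{\rm start}},\ \nrm{a_{\rm start}}+E+C_0\}+E,
\]
where $E$ is the forcing mass. During a copy half-turn $b$ is frozen up to forcing and $a$ relaxes to $b$, giving an analogous bound. The finite interval $[0,t_1)$ meets at most $t_1/\pi+2$ half-turns. Iterating, $R$ increases by at most $C_0+3\int_0^{t_1}\nrm\varphi$ per half-turn, so $R$ is bounded on $[0,t_1)$. If $\rho=0$ then both rates vanish, and $a,b$ move only by forcing, which is bounded by the forcing mass.

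\emph{Conclusion.} The solution is bounded on $[0,t_1)$. By the continuation criterion, a maximal solution cannot end at a finite time before which its forcing mass is finite, which is the second claim.

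\emph{Main obstacle.} The only real point to check is the linear bound on $H$, that is, that no hidden product of two unbounded quantities occurs in \eqref{eq:H}. Everything else is a direct application of Lemma~\ref{lem:cubic}(iii) on each half-turn.
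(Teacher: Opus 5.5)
Your proof is correct and is the paper's argument: the clock is explicit, $[0,t_1)$ meets finitely many half-turns, and on each one the frozen copy moves only by forcing while Lemma~\ref{lem:cubic}(iii) bounds the relaxing copy. The linear bound $\nrm{H(a)}\le\nrm{a}+C_0$, which you flag as the main obstacle, is true but not needed: continuity of $H$ already makes the target bounded while $a$ stays in a bounded set, and that is all the paper uses.
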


\begin{proof}
The clock $(\theta,u,v)$ is explicit, and $[0,t_1)$ meets finitely many
half-turns (none if $\rho=0$). On a compute half-turn $a$ moves only by forcing
and stays bounded, so the target $H(a)$ is bounded and Lemma~\ref{lem:cubic}(iii)
bounds $b$. On a copy half-turn the roles are exchanged. Proceed half-turn by
half-turn.
\end{proof}

\begin{remark}[How uniformity arises]\label{rem:mechanism}
An arbitrary initial state need not encode any computation, and the monitor may
spend a long time processing garbage. But garbage is free: output is paid only at
the restart label, and every paid restart is clean (Lemma~\ref{lem:H}(b)). It
erases the garbage and restarts the honest computation with a budget that has
never decreased (Lemma~\ref{lem:H}(a)). After $T+1$ payments the honest
computation has enough budget to reach the halting label, where it stays. The
construction never uses $T$; it appears only in the bound $A_n$.
\end{remark}

\section{Algebraization}\label{sec:roots}

The monitor is built from coordinates and rational constants by $+$, $\times$ and
$|\cdot|$. Absolute values cannot be replaced by rational functions at bounded
cost: if $|R(t)-|t||$ were bounded for a rational $R$, then $R(t)/t$ would tend to
$1$ at $+\infty$ and to $-1$ at $-\infty$, which is impossible for a rational
function. We therefore pass through square roots, which ODEs can represent with
extra variables. Both steps create errors, and both must be controlled uniformly
over all initial states, including the initial values of the new variables. The
unifying principle is Proposition~\ref{prop:halt}: \emph{every error is a forcing
of the monitor, and forcing mass costs output at the fixed rate $A_n/\delta$.}

\subsection*{Smoothing with errors integrable in time}

Call a \emph{formula} an expression built from the coordinates of $x$ and rational
constants by $+$, $\times$ and $|\cdot|$. Each component of $f$ and $o$ is given
by a fixed formula, in which $n$ enters only through constants.

\begin{lemma}\label{lem:smooth}
Let $E$ be a formula and $L\colon\R^{d_0}\to[1,\infty)$. Let $\hat E$ be obtained
by replacing, from the inside out, each subexpression $|E'|$ by
$\sqrt{\hat E'^2+L^{-2}}$. There are polynomials $U_E,V_E\ge0$, computable from $E$
and independent of $L$, such that $|E|,|\hat E|\le U_E$ and
$|\hat E-E|\le V_E/L$ pointwise.
\end{lemma}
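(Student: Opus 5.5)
Structural induction on the formula $E$ proves both bounds at once. The elementary fact behind everything is that for all real $s$ and all $\epsilon>0$,
\[
0\le\sqrt{s^2+\epsilon^2}-|s|\le\epsilon \qquad\text{and}\qquad \bigl|\sqrt{s^2+\epsilon^2}-\sqrt{t^2+\epsilon^2}\bigr|\le|s-t|.
\]
The first inequality holds because $\sqrt{s^2+\epsilon^2}\le|s|+\epsilon$. The second holds because $s\mapsto\sqrt{s^2+\epsilon^2}$ is $1$-Lipschitz.

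We define $U_E$ and $V_E$ recursively along the syntax tree.

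\emph{Base cases.} For a coordinate $x_i$ or a rational constant $c$, set $\hat E=E$ and $V_E=0$. Take $U_E=1+x_i^2$ or $U_E=|c|$, respectively; the latter is a nonnegative rational constant, hence a polynomial.

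\emph{Sum.} For $E=E_1+E_2$, take $U_E=U_{E_1}+U_{E_2}$ and $V_E=V_{E_1}+V_{E_2}$.

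\emph{Product.} For $E=E_1E_2$, write
\[
\hat E_1\hat E_2-E_1E_2=(\hat E_1-E_1)\hat E_2+E_1(\hat E_2-E_2).
\]
This gives the error bound $(V_{E_1}U_{E_2}+U_{E_1}V_{E_2})/L$, so we take $V_E=V_{E_1}U_{E_2}+U_{E_1}V_{E_2}$ and $U_E=U_{E_1}U_{E_2}$.

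\emph{Absolute value.} For $E=|E'|$ we have $\hat E=\sqrt{\hat E'^2+L^{-2}}$. Using $L\ge1$,
\[
|\hat E|\le|\hat E'|+L^{-1}\le U_{E'}+1,
\]
so we take $U_E=U_{E'}+1$. For the error, the triangle inequality gives
\[
\Bigl|\sqrt{\hat E'^2+L^{-2}}-|E'|\Bigr|\le\Bigl|\sqrt{\hat E'^2+L^{-2}}-\sqrt{E'^2+L^{-2}}\Bigr|+\Bigl(\sqrt{E'^2+L^{-2}}-|E'|\Bigr)\le\frac{V_{E'}}{L}+\frac1L.
\]
So we take $V_E=V_{E'}+1$.

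In every case $U_E$ and $V_E$ are sums and products of nonnegative polynomials and nonnegative constants. They are therefore nonnegative polynomials with rational coefficients, and they are computable from the syntax tree of $E$ alone, without reference to $L$. This gives the claimed uniformity in $L$. The bound $|E|\le U_E$ is proved in the same induction alongside $|\hat E|\le U_E$.

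The only delicate point is the absolute-value step. One must not reintroduce $L$ into the bounds, and the hypothesis $L\ge1$ is exactly what allows $L^{-1}\le1$ in $U_E$ while keeping the error proportional to $1/L$. It is also important to use the $1$-Lipschitz property of $\sqrt{\,\cdot\,^2+\epsilon^2}$ rather than a derivative bound. A derivative bound would produce a factor of order $L$, whereas the Lipschitz bound prevents errors from being amplified as they pass through nested absolute values.
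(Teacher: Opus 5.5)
Your proof is correct and is essentially the paper's: the same structural induction with the same rules $(U_1+U_2,\,V_1+V_2)$ for sums, $(U_1U_2,\,V_1U_2+U_1V_2)$ for products, and $(U_{E'}+1,\,V_{E'}+1)$ for absolute values. The differences are cosmetic. You take $U_c=|c|$ where the paper takes $1+|c|$. In the absolute-value step you pass through $\sqrt{E'^2+L^{-2}}$, whereas the paper passes through $|\hat E'|$ using $\bigl||\hat E'|-|E'|\bigr|\le|\hat E'-E'|$; both give the same bound $V_{E'}+1$.
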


\begin{proof}
Induction on $E$ with the rules: a coordinate $x_i$ gets $(U,V)=(1+x_i^2,0)$; a
constant $c$ gets $(1+|c|,0)$; a sum gets $(U_1+U_2,V_1+V_2)$; a product gets
$(U_1U_2,\,V_1U_2+U_1V_2)$, because
$\hat E_1\hat E_2-E_1E_2=(\hat E_1-E_1)\hat E_2+E_1(\hat E_2-E_2)$; an absolute
value gets $(U_1+1,V_1+1)$, because
$0\le\sqrt{t^2+L^{-2}}-|t|\le L^{-1}$ and $\bigl||\hat E_1|-|E_1|\bigr|\le|\hat E_1-E_1|$.
\end{proof}

Let $V=1+\sum V_E$, the sum running over the components of $f$ and $o$, and set
\[
  \epsilon_*=2^{-10},\qquad L=\epsilon_*^{-1}(1+\theta^2)\,V,\qquad
  \sigma(\theta)=\frac{\epsilon_*}{1+\theta^2}.
\]
Since $V_E\le V$, the smoothed field $\hat f$ and output $\hat o$ satisfy, on all
of $\R^{d_0}$,
\begin{equation}\label{eq:smooth}
  \nrm{\hat f-f}\le\sigma(\theta),\qquad|\hat o-o|\le\sigma(\theta),\qquad
  \int\sigma(\theta(t))\,dt\le\pi\epsilon_*<\delta,
\end{equation}
where the integral is taken along any trajectory segment; the clock contains no
absolute values, so $\dot\theta=1$ is untouched. The precision must grow with $|x|$,
to beat the polynomial growth of the $V_E$ over unbounded initial states, and with
$\theta$, to make the error integrable in time. Consequently every
$\hat f$-trajectory is a monitor trajectory with forcing
$\varphi(t)=\hat f(x(t))-f(x(t))$ of mass less than $\delta$. By
Proposition~\ref{prop:halt}, if $U$ halts then
$\B(\hat f,\hat o)\le A_n+(A_n/\delta+1)\pi\epsilon_*$. By
Lemma~\ref{lem:noblowup} and Proposition~\ref{prop:nonhalt}, if $U$ does not halt
then the $\hat f$-solution from $x_n^*$ is global and has infinite output. We
already have an analytic system with properties (H) and (N); it remains to
remove the radicals.

\subsection*{Square roots as extra variables}

List the radicals of $\hat f$ and $\hat o$ in evaluation order, $j=1,\dots,J$.
All their arguments are rational with denominators powers of $L$, and
$\sqrt{(N/L^k)^2+L^{-2}}=\sqrt{N^2+L^{2k-2}}/L^k$. So the $j$-th radical can be
written $\xi_j/L^{k_j}$, with $k_j\ge1$ and
\begin{equation}\label{eq:radicals}
  \xi_j=\sqrt{D_j(x,\xi_1,\dots,\xi_{j-1})},\qquad D_j=N_j^2+L^{2k_j-2}\ge1,
\end{equation}
where $N_j$ is a polynomial. Denote by $\xi^*(x)$ the values defined recursively
by~\eqref{eq:radicals}. Then $\hat f(x)=\bar f(x,\xi^*(x))$ and
$\hat o(x)=\bar o(x,\xi^*(x))$ for rational functions $\bar f,\bar o$ whose
denominators are powers of $L$.

Introduce variables $w\in\R^J$, meant to satisfy $w_j^2=\xi_j$, and set
\[
  \tilde f(x,w)=\bar f(x,w_1^2,\dots,w_J^2),\quad
  \tilde o(x,w)=\bar o(x,w_1^2,\dots,w_J^2),\quad
  \D_j(x,w)=D_j(x,w_1^2,\dots,w_{j-1}^2),
\]
and the \emph{defects} $R_j=w_j^4-\D_j$. The rational system $G$ on $\R^d$,
$d=d_0+J$, is
\begin{equation}\label{eq:G}
  \dot x=\tilde f(x,w),\qquad
  \dot w_j=\frac{w_j\Lambda_j}{4\D_j},\qquad
  \Lambda_j=\nabla_x\D_j\cdot\tilde f+\sum_{i<j}\partial_{w_i}\D_j\,\dot w_i,
\end{equation}
defined recursively in $j$. Its denominators are powers of $L$ and of the
$\D_j\ge1$, so $G$ is smooth on all of $\R^d$. Its clock components are exactly
$\dot\theta=1$, $\dot u=-v$, $\dot v=u$.

\begin{lemma}[First integrals]\label{lem:integrals}
Along solutions of~\eqref{eq:G}, $\Lambda_j=\frac{d}{dt}\D_j$ and each ratio
$w_j^4/\D_j$ is constant. Hence $\{R=0\}$ is invariant. On it $w_j^2=\xi_j^*(x)$
for all $j$, so the $x$-equation is $\dot x=\hat f(x)$ and $\tilde o=\hat o$.
\end{lemma}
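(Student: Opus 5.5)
The plan is to prove the three claims in order by a direct chain-rule computation, using induction on $j$ because the system \eqref{eq:G} is defined recursively.

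\emph{Step 1: $\Lambda_j=\frac{d}{dt}\D_j$ along solutions.} Recall that $\D_j(x,w)=D_j(x,w_1^2,\dots,w_{j-1}^2)$ depends only on $x$ and on $w_1,\dots,w_{j-1}$. Along a solution the chain rule gives
\[
\frac{d}{dt}\D_j=\nabla_x\D_j\cdot\dot x+\sum_{i<j}\partial_{w_i}\D_j\,\dot w_i .
\]
Substituting $\dot x=\tilde f(x,w)$, this is exactly the expression defining $\Lambda_j$ in \eqref{eq:G}. It holds for every $j$ at once, so the induction is only needed to see that $\Lambda_j$ is well defined, since it involves $\dot w_i$ for $i<j$ only.

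\emph{Step 2: $w_j^4/\D_j$ is constant.} The ratio is smooth because $\D_j\ge1$. Writing $\dot{\D}_j=\frac{d}{dt}\D_j$, we compute
\[
\frac{d}{dt}\frac{w_j^4}{\D_j}
=\frac{4w_j^3\dot w_j\,\D_j-w_j^4\dot{\D}_j}{\D_j^2}.
\]
By Step 1 we have $\dot{\D}_j=\Lambda_j$, and $4w_j^3\dot w_j=w_j^4\Lambda_j/\D_j$. Hence the numerator equals $w_j^4\Lambda_j-w_j^4\Lambda_j=0$. It follows that $R_j=w_j^4-\D_j=(w_j^4/\D_j-1)\D_j$ vanishes for all times exactly when it vanishes initially. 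This proves that $\{R=0\}$ is invariant, for every $j$ simultaneously.

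\emph{Step 3: identification on $\{R=0\}$.} This is an induction on $j$ and is the only point needing care. Suppose $w_i^2=\xi_i^*(x)$ for all $i<j$. Then, by \eqref{eq:radicals},
\[
\D_j(x,w)=D_j(x,\xi_1^*,\dots,\xi_{j-1}^*)=\xi_j^*(x)^2 .
\]
From $w_j^4=\D_j$ we get $w_j^2=\sqrt{\D_j}=\xi_j^*(x)$, where we take the nonnegative root: $w_j^2\ge0$, and $\xi_j^*$ is by definition the nonnegative square root. This is the key observation that the fourth-power variables avoid any sign ambiguity.

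With $w_j^2=\xi_j^*(x)$ for all $j$, we obtain
\[
\tilde f(x,w)=\bar f(x,\xi^*(x))=\hat f(x),\qquad \tilde o(x,w)=\bar o(x,\xi^*(x))=\hat o(x),
\]
which is the last claim. I expect no real obstacle here. The only subtlety is keeping the recursion order straight, so that each $\D_j$ is evaluated with the already-identified $w_i$, $i<j$.
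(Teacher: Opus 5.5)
Your proposal is correct and follows essentially the same route as the paper: the chain rule gives $\Lambda_j=\frac{d}{dt}\D_j$, the identity $\frac{d}{dt}w_j^4=(w_j^4/\D_j)\,\frac{d}{dt}\D_j$ (which you phrase via the quotient rule) gives conservation of $w_j^4/\D_j$, and induction on $j$ with $w_j^2\ge0$ selects $\sqrt{\D_j}=\xi_j^*(x)$. You also spell out the invariance of $\{R=0\}$ through $R_j=(w_j^4/\D_j-1)\D_j$ with $\D_j\ge1$, a step the paper leaves implicit.
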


\begin{proof}
The first claim is the chain rule, as $\D_j$ depends only on $x$ and $w_{<j}$.
Then $\frac{d}{dt}w_j^4=4w_j^3\dot w_j=(w_j^4/\D_j)\,\frac{d}{dt}\D_j$, so
$w_j^4/\D_j$ is constant. On $\{R=0\}$ we have $w_j^2=\sqrt{\D_j}$ because
$w_j^2\ge0$, and induction on $j$ gives $w_j^2=\xi^*_j(x)$.
\end{proof}

Two features deserve comment. Using $w_j^2$, rather than a variable for $\xi_j$
itself, rules out the spurious branch $-\sqrt{D_j}$, which would also be
compatible with $\xi_j^2=D_j$. And the dynamics does not correct wrong initial
values of $w$: the ratios $w_j^4/\D_j$ are conserved. We do not try to make the
auxiliary variables converge; instead we charge for their errors. For this we
need the errors to be controlled by the defects.

\begin{lemma}[Defects control the error]\label{lem:defects}
There is a polynomial $W(x,w)\ge1$, computable and of degree bounded
independently of $n$, such that on all of $\R^d$
\begin{equation}\label{eq:defects}
  \nrm{\tilde f-f}\le\sigma(\theta)+W|R|,\qquad|\tilde o-o|\le\sigma(\theta)+W|R| .
\end{equation}
\end{lemma}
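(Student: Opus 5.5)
Since $\nrm{\hat f-f}\le\sigma(\theta)$ and $|\hat o-o|\le\sigma(\theta)$ hold on all of $\R^{d_0}$ by~\eqref{eq:smooth}, the triangle inequality reduces the claim to bounding $\nrm{\tilde f-\hat f}$ and $|\tilde o-\hat o|$ by $W|R|$. Recall that $\tilde f(x,w)=\bar f(x,w^2)$ and $\hat f(x)=\bar f(x,\xi^*(x))$, where $w^2:=(w_1^2,\dots,w_J^2)$. Hence it suffices to show two things: a Lipschitz-type bound for $\bar f,\bar o$ in the $\xi$ arguments, and a bound of $|w_j^2-\xi^*_j(x)|$ in terms of the defects. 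Both bounds must be polynomial and uniform on all of $\R^d$.

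\emph{Step 1: perturbing the radicals.} Put $e_j=w_j^2-\xi_j^*$ and $\bar D_j=\D_j(x,w)$. We have $w_j^4-\bar D_j=R_j$ with $w_j^2\ge0$ and $\bar D_j\ge1$. Using $\sqrt{\bar D_j}\ge1$,
\[
|w_j^2-\sqrt{\bar D_j}|=\frac{|R_j|}{w_j^2+\sqrt{\bar D_j}}\le|R_j|.
\]
Next, $\bigl|\sqrt{\bar D_j}-\sqrt{D_j(x,\xi^*_{<j})}\bigr|\le\tfrac12|\bar D_j-D_j(x,\xi^*_{<j})|$, since both radicands are at least $1$. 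Because $D_j$ is a polynomial, this difference equals $\sum_{i<j}Q_{ji}\,e_i$, where $Q_{ji}$ is the polynomial divided difference of $D_j$ with respect to the $\xi_i$ argument. The divided difference is evaluated at a mix of $w_{<j}^2$ and $\xi^*_{<j}$. To keep it polynomial in $(x,w)$, we bound $|\xi^*_i|$ by a polynomial in $x$: indeed $\xi_i^*=L^{k_i}|\hat E'|$-type quantities, which are controlled via the $U_E$ of Lemma~\ref{lem:smooth}, or simply by $\sqrt{D_i}\le 1+D_i$ recursively. Then $|Q_{ji}|\le\tilde Q_{ji}(x,w)$ for a polynomial $\tilde Q_{ji}$ with nonnegative coefficients, obtained by replacing every monomial by its absolute-value majorant. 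Induction on $j$ gives $|e_j|\le W_j(x,w)\,|R|$ with polynomials $W_j\ge1$ whose degrees depend only on the formulas, and not on $n$. The input $n$ enters only through constants in $r_n$, so the structure of every formula, and hence every degree, is fixed.

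\emph{Step 2: perturbing $\bar f,\bar o$.} The functions $\bar f$ and $\bar o$ are polynomials in $\xi$ divided by powers of $L\ge1$. So $\bar f(x,w^2)-\bar f(x,\xi^*)=L^{-p}\sum_j(\text{divided difference})\,e_j$, which we bound the same way by a polynomial in $(x,w)$ times $\max_j|e_j|$. Dropping the factor $L^{-p}\le1$, we sum over all components of $f$ and $o$ and set $W$ to be $1$ plus the sum of all the resulting majorants. The main obstacle is the bookkeeping in Step~1. The intermediate points of the divided differences mix $w^2$ with $\xi^*(x)$, and $\xi^*$ is not polynomial. The remedy is to majorize monomials termwise by $|w_i|^2+\xi_i^*\le w_i^2+1+D_i(x,\xi^*_{<i})$ and to recurse, which yields polynomial bounds in $x$ alone for $\xi^*$. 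One also needs $|R|$ in the sup or Euclidean norm, which changes $W$ only by a constant.
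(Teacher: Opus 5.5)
Your proposal is correct and follows essentially the same route as the paper. The paper likewise uses the factorization $w_j^4-\D_j=(w_j^2-\sqrt{\D_j})(w_j^2+\sqrt{\D_j})$, the $\frac12$-Lipschitz property of the square root on $[1,\infty)$, and telescoping bounds whose nonnegative-coefficient majorants are evaluated at $w_i^2+S_i(x)$, where the $S_i$ are recursive polynomial majorants of $\xi^*_i$; it then inducts on $j$ and finishes with a Lipschitz step for $\bar f,\bar o$ using $L\ge1$. The recursive bound on $\xi^*_i$ is only legitimate with the monotone majorant of $D_i$ (absolute-value coefficients, $x_k\mapsto1+x_k^2$), which is exactly the paper's Tool~2, and $|R_i|\le|R|$ holds in either norm, so no extra constant is needed.
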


\begin{proof}
By~\eqref{eq:smooth} it suffices to bound $|\tilde f-\hat f|$ and
$|\tilde o-\hat o|$ by $W|R|$. We use two elementary tools.

\emph{Tool 1.} For a polynomial $p(x,\eta)$ there are polynomials $C_i$ with
nonnegative coefficients such that
$|p(x,\eta)-p(x,\eta')|\le\sum_iC_i(x,T)|\eta_i-\eta_i'|$ whenever
$|\eta_i|,|\eta'_i|\le T_i$; this follows by telescoping,
$|s^e-t^e|\le eT^{e-1}|s-t|$ and $|x_i|\le1+x_i^2$.

\emph{Tool 2.} Polynomial majorants $S_j(x)\ge\xi^*_j(x)$: take
$S_j=1+D_j^+(x,S_1,\dots,S_{j-1})$, where $D_j^+$ is obtained from $D_j$ by
replacing each coefficient by its absolute value and each $x_i$ by $1+x_i^2$; this
works because $\sqrt D\le1+D$.

Let $d_j=|w_j^2-\xi^*_j(x)|$. Since $w_j^4-\D_j=(w_j^2-\sqrt{\D_j})(w_j^2+\sqrt{\D_j})$
and the second factor is at least $1$, we have $|w_j^2-\sqrt{\D_j}|\le|R_j|$.
Since $\sqrt{\cdot}$ is $\frac12$-Lipschitz on $[1,\infty)$,
$|\sqrt{\D_j}-\xi^*_j|\le\frac12|D_j(x,w^2_{<j})-D_j(x,\xi^*_{<j})|$. By Tool~1
with $T_i=w_i^2+S_i$,
\[
  d_j\le|R_j|+\sum_{i<j}C_{ji}(x,w)\,d_i,
\]
and induction on $j$ gives $d_j\le\sum_{i\le j}W_{ji}(x,w)|R_i|$ with polynomials
$W_{ji}\ge0$. Finally, each component of $\tilde f-\hat f$ or $\tilde o-\hat o$ has
the form $\bigl(p(x,w^2)-p(x,\xi^*)\bigr)/L^e$ with $L\ge1$, and Tool~1 bounds it
by a polynomial combination of the $d_j$. Collecting terms and using
$|R_i|\le|R|$ gives $W$. Only fixed symbolic operations are involved, so the
degree of $W$ does not depend on $n$.
\end{proof}

No proximity to $\{R=0\}$ is assumed: $w$ may be huge, and the bound is global.
The price is the polynomial weight $W$, which reappears in the penalty.

\subsection*{A quadratic penalty}

Give the rational system $G$ the output
\begin{equation}\label{eq:penalty}
  g=\tilde o-(1+\theta^2)\,W^2|R|^2 .
\end{equation}

\begin{proposition}\label{prop:G}
$U$ halts on $n$ if and only if $\B(G,g)<\infty$.
\end{proposition}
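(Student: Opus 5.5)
Both directions reduce to the monitor results by treating every discrepancy as forcing. A trajectory of $G$ is a solution of \eqref{eq:monitor} in its $x$-component with forcing $\varphi(t)=\tilde f(x,w)-f(x)$. The clock components of $G$ are exactly those of $f$, so the forcing acts only on $(a,b)$. By Lemma~\ref{lem:defects},
\[
\nrm{\varphi(t)}\le\sigma(\theta(t))+W|R|,
\]
where $W$ and $R$ are evaluated along the trajectory.

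\emph{Halting $\Rightarrow$ $\B(G,g)<\infty$.} Take any solution segment of $G$ on $[0,t]$. By Lemma~\ref{lem:defects},
\[
\int g\le\int o+\int\sigma+\int W|R|-\int(1+\theta^2)W^2|R|^2 .
\]
Proposition~\ref{prop:halt} holds for arbitrary initial states and arbitrary continuous forcing, so it gives
\[
\int o\le A_n+\frac{A_n}{\delta}\Bigl(\int\sigma+\int W|R|\Bigr).
\]
Collecting terms and using $\int\sigma\le\pi\epsilon_*$ from \eqref{eq:smooth}, we get
\[
\int_0^t g\le A_n+C_n\pi\epsilon_*+\int_0^t\Bigl(C_nW|R|-(1+\theta^2)W^2|R|^2\Bigr)dt,
\qquad C_n=1+A_n/\delta .
\]
Maximizing the integrand pointwise in $q=W|R|\ge0$ bounds it by $C_n^2/\bigl(4(1+\theta^2)\bigr)$. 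Since $\dot\theta=1$, its time integral is at most $C_n^2\pi/4$ along any segment. This bound is uniform in the initial state, including the initial values of $w$, so $\B(G,g)<\infty$. The point to state carefully is that Proposition~\ref{prop:halt} is applied with the $G$-induced forcing. That forcing is continuous because $G$ is smooth, and it is legitimate precisely because the proposition allows any forcing.

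\emph{Non-halting $\Rightarrow$ $\B(G,g)=\infty$.} Start $G$ at $(x_n^*,w^*)$ with $w^*_j=\sqrt{\xi^*_j(x_n^*)}$, which makes sense since $\xi_j^*\ge1$; then $R=0$ initially. By Lemma~\ref{lem:integrals}, $\{R=0\}$ is invariant, so along this solution $g=\tilde o=\hat o$ and $\dot x=\hat f(x)$. Hence the $x$-part is the $\hat f$-solution from $x_n^*$, which is a monitor trajectory with forcing of mass less than $\delta$ by \eqref{eq:smooth}. By Lemma~\ref{lem:noblowup} and Proposition~\ref{prop:nonhalt}, this solution is global with $\int o=\infty$.

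It remains to pass from $o$ to $\hat o$ and to check that $w$ exists globally. Since $|\hat o-o|\le\sigma$ and $\int\sigma<\infty$, the integral $\int\hat o$ also diverges. On $\{R=0\}$ we have $w_j^2=\xi_j^*(x)$, and $w_j$ keeps its sign by uniqueness, so $w$ is a continuous function of the bounded-on-compacts $x(t)$ and cannot blow up. Thus the $G$-solution is global and $\sup_t\int_0^t g=\infty$. I expect no real obstacle in this direction.

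The main delicacy is in the halting direction: making sure the penalty's quadratic term dominates the linear forcing cost with a time-integrable remainder. This is exactly why the penalty carries the weight $(1+\theta^2)W^2$ rather than just $|R|^2$.
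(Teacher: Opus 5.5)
Your proposal is correct and follows essentially the same route as the paper. The halting direction treats $\tilde f - f$ as forcing, applies Proposition~\ref{prop:halt} with Lemma~\ref{lem:defects}, and completes the square against the $(1+\theta^2)W^2|R|^2$ penalty. The non-halting direction uses the witness $(x_n^*,w_n^*)$ on the invariant set $\{R=0\}$. You also spell out two details the paper leaves implicit: the passage from $o$ to $\hat o$ via $\int\sigma<\infty$, and why $w$ cannot blow up.
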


\begin{proof}
Suppose $U$ halts, and let $(x,w)$ solve~\eqref{eq:G} on $[0,t]$. Its
$x$-component solves the monitor~\eqref{eq:monitor} with the continuous forcing
$\varphi=\tilde f(x,w)-f(x)$, since the clock is exact, and
$\nrm{\varphi}\le\sigma+\mathcal R$ with $\mathcal R=W|R|$. By
Proposition~\ref{prop:halt} and~\eqref{eq:defects}, with $c_n=A_n/\delta+1$,
\begin{align*}
  \int_0^tg&\le A_n+c_n\int_0^t\sigma
      +\int_0^t\bigl(c_n\mathcal R-(1+\theta^2)\mathcal R^2\bigr)\\
  &\le A_n+c_n\pi\epsilon_*+\frac{c_n^2}{4}\int_0^t\frac{ds}{1+\theta(s)^2}
  \le A_n+c_n\pi\epsilon_*+\frac{\pi c_n^2}{4},
\end{align*}
by completing the square. The bound is uniform in the initial values of $x$ and
$w$. The quadratic penalty beats the linear cost of the defects, and the weight
$1+\theta^2$ makes the remainder integrable in time.

Suppose $U$ does not halt. Let $w_n^*$ have components
$\xi^*_j(x_n^*)^{1/2}$, so that $R=0$ at $(x_n^*,w_n^*)$. By
Lemma~\ref{lem:integrals} the solution stays in $\{R=0\}$, its $x$-component
solves $\dot x=\hat f(x)$, and $g=\hat o$ along it. The $\hat f$-solution from
$x_n^*$ is global with infinite output, as shown after~\eqref{eq:smooth}, and
$w_j^4=\D_j$ stays bounded on bounded time intervals. So the $G$-solution is
global and its output diverges.
\end{proof}

The coefficient of the penalty does not depend on the unknown halting time:
completing the square absorbs whatever $c_n$ the proof produces. This is exactly
where it matters that only upper bounds on the output are needed
(Definition~\ref{def:B}).

\subsection*{Clearing denominators}

Let $Q=L^{e}\prod_j\D_j^{e_j}\ge1$, with exponents large enough that $P=QG$ and
$b=Qg$ are polynomials.

\begin{lemma}\label{lem:timechange}
$\B(P,b)=\B(G,g)$.
\end{lemma}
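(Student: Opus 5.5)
The plan is to use that $Q\ge1$ is a positive smooth function on all of $\R^d$: the systems $\dot x=G(x)$ and $\dot x=P(x)=Q(x)G(x)$ then have the same oriented orbits, differing only by a reparametrization of time. Meanwhile $b\,dt_P=Qg\,dt_P=g\,dt_G$. Concretely, let $x(t)$ solve $\dot x=G(x)$ on $[0,t^+)$ from $x_0$. Define $\tau(t)=\int_0^t Q(x(s))^{-1}\,ds$. This is strictly increasing and continuous, with $\tau(t)\le t$ because $Q\ge1$. Set $y(\tau(t))=x(t)$. By the chain rule,
\[
\frac{dy}{d\tau}=Q(x)\,\dot x=Q(y)G(y)=P(y),
\]
so $y$ solves $\dot y=P(y)$ from the same initial point. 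Substituting $d\tau=dt/Q$ gives
\[
\int_0^{\tau(t)}b(y(\sigma))\,d\sigma=\int_0^t Q(x(s))\,g(x(s))\,\frac{ds}{Q(x(s))}=\int_0^t g(x(s))\,ds .
\]
Hence every value appearing in the supremum defining $\B(G,g)$ also appears in the one defining $\B(P,b)$.

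For the converse direction I will run the same argument backwards. Take a solution segment $y\colon[0,T]\to\R^d$ of $P$ and set $t(\tau)=\int_0^\tau Q(y)\,d\sigma$. This is finite because $y$ is continuous on the compact interval $[0,T]$, so $Q(y)$ is bounded there. Then $x(t(\tau))=y(\tau)$ solves $\dot x=G(x)$ on $[0,t(T)]$, again by the chain rule, and the two output integrals agree as before. By uniqueness this $x$ is the maximal $G$-solution from $y(0)$ restricted to $[0,t(T)]$, and that interval lies inside its interval of existence. So both suprema range over the same set of numbers, including $0$ at time $0$, and $\B(P,b)=\B(G,g)$. This mirrors the argument in Proposition~\ref{prop:lift}.

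The only point needing care is that Definition~\ref{def:B} takes the supremum over segments $[0,t]$ with $t$ strictly inside the maximal interval. So I must check that the correspondences map such segments to such segments; I need not match the maximal existence times themselves. That is why I will work with compact segments in both directions. The regularity needed is mild: $G$ is smooth on $\R^d$ because its denominators are powers of $L\ge1$ and of $\D_j\ge1$, and $P$, $Q$ are polynomials. Local existence and uniqueness therefore hold for both fields. Positivity $Q\ge1$ is the one essential input, since it keeps the time changes strictly monotone in both directions. I expect no real obstacle.
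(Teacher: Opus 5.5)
Your proposal is correct and is essentially the paper's proof. Both use the positive time change relating $dt$ (for $G$) and $d\tau = dt/Q$ (for $P$) to put finite solution segments of $G$ and of $P$ in one-to-one correspondence, with $b\,d\tau = g\,dt$. The segments stay finite because $Q\ge 1$ in one direction and $Q$ is bounded on a compact segment in the other. Writing the time change as an explicit integral rather than as the solution of $dt/ds=Q(X(t))$ is only a cosmetic difference.
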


\begin{proof}
If $X$ solves $\dot X=G(X)$ on $[0,t_1]$, let $t(s)$ solve $dt/ds=Q(X(t))$,
$t(0)=0$. Then $Y(s)=X(t(s))$ solves $dY/ds=P(Y)$, and
$\int b(Y)\,ds=\int Qg\,ds=\int g\,dt$. On the compact orbit segment
$1\le Q\le\max Q$, so $t(s)$ reaches $t_1$ at a finite time $s_1$. The same
argument with $ds/dt=1/Q$ works backwards. Finite segments of the two systems thus
correspond, with equal outputs.
\end{proof}

\section{Proof of the Main Theorem}\label{sec:proof}

Fix $U$ as in Fact~\ref{fact:minsky}, with $k$ registers and $S$ instruction
labels. For input $n$, Sections~\ref{sec:machine}--\ref{sec:roots} produce
$(G_n,g_n)$ and $(P_n,b_n)$. Everything except $r_n$ is independent of $n$: the
formulas for $f$ and $o$ have a fixed shape, and all later steps (the majorants
$V_E$, the scale $L$, the radicals, $W$, $Q$) are fixed symbolic operations on
these formulas. We keep every radical in the list even if a special input would
allow a simplification, so $J$ is fixed. Thus the dimension $d=3+2(k+3)+J$ is
fixed, the degrees of $P_n$ and $b_n$ are bounded independently of $n$, and the
coefficients are computable from $n$ by rational arithmetic. The constants $K$
and $\epsilon_*$ are rational; $\delta$, $\Delta$ and the witness state
$(x^*_n,w^*_n)$ appear only in proofs.

Choose once and for all an odd $\ell$ exceeding the degree bound for $P_n$, with
$\ell-1$ exceeding that for $b_n$, and let $F_n$ be the field~\eqref{eq:F} built
from $(P_n,b_n)$. It lives on $\R^N$, $N=d+1=2k+J+10$, and has odd degree
$D=\ell+2$. By Proposition~\ref{prop:G}, Lemma~\ref{lem:timechange} and
Proposition~\ref{prop:lift},
\begin{align*}
  U\text{ halts on }n
  &\iff\B(G_n,g_n)<\infty\iff\B(P_n,b_n)<\infty\\
  &\iff\text{the origin is Lyapunov stable for }\dot Y=F_n(Y).
\end{align*}
An algorithm deciding stability in this class would therefore decide the halting
problem for $U$, contradicting Fact~\ref{fact:minsky}. \qed

\paragraph{The construction at a glance.} For the record, the map $n\mapsto F_n$
consists of the following explicit steps. None of them runs $U$ or uses its
halting time.
\begin{enumerate}
\item Put $r_n=(n,0,\dots,0)$ into $M_0$ and form $H$ by~\eqref{eq:H}.
\item Form the monitor field $f$ by~\eqref{eq:monitor} with $\varphi=0$, and the
  output $o$ by~\eqref{eq:output}.
\item Compute the polynomials $V_E$ of Lemma~\ref{lem:smooth}, then $V$, $L$ and
  the smoothings $\hat f$, $\hat o$.
\item List the $J$ radicals and the $D_j$; form $\tilde f$, $\tilde o$, $\D_j$,
  $\Lambda_j$ and the system $G$ of~\eqref{eq:G}.
\item Compute $W$ (Lemma~\ref{lem:defects}) and the penalized output $g$
  by~\eqref{eq:penalty}.
\item Choose $Q$ and form $(P_n,b_n)=(QG,Qg)$.
\item Apply the projective lift~\eqref{eq:F} with the fixed odd $\ell$.
\end{enumerate}

\section{Removing all nonzero equilibria}

The constructed $F$ has an entire hyperplane of equilibria. The following
general construction removes this degeneracy without changing Lyapunov
stability. It is an explicit polynomial version of a rotating-frame change of
variables; its proof uses only homogeneity and direct differentiation.

\begin{lemma}[A unique equilibrium by rotation]\label{lem:unique}
Let $F\colon\R^{N_0}\to\R^{N_0}$ be a polynomial vector field with rational
coefficients, homogeneous of degree $D_0\ge1$. One can effectively construct a
rational homogeneous polynomial field $\mathcal F$ of degree $2D_0+1$ on
$\R^{N_1}$, where
\begin{equation}\label{eq:unique-size}
  N_1=2\left\lceil\frac{N_0}{2}\right\rceil+2,
\end{equation}
such that
\begin{equation}\label{eq:unique-equivalence}
  \mathcal F^{-1}(0)=\{0\},\qquad
  0\text{ is Lyapunov stable for }\mathcal F
  \iff 0\text{ is Lyapunov stable for }F.
\end{equation}
\end{lemma}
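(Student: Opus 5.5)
We build $\mathcal F$ as the field of a \emph{rotating frame}: two new coordinates $(p,q)$ rotate uniformly, and the old state is rotated along with them. The rotation rules out nonzero equilibria, while a polynomial change of variables recovers a constant time-rescaling of $F$.

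\emph{Step 1: padding to even dimension.} If $N_0$ is odd, append one coordinate $y_{N_0+1}$ with $\dot y_{N_0+1}=0$. This keeps homogeneity of degree $D_0$. It does not change stability: the new coordinate is constant, so the bound of Lemma~\ref{lem:scaling} holds for the padded field if and only if it holds for $F$. So assume $N_0=2m$. Let $J$ be the standard complex structure on $\R^{2m}$, that is, a block-diagonal rotation by $\pi/2$ with $J^2=-I$ and $|JZ|=|Z|$. Let $J_2$ be the same on $\R^2$. For $(p,q)\in\R^2$ put $R_{p,q}=pI+qJ$. These matrices commute with $J$ and satisfy
\[
  R_{p,q}R_{p,-q}=(p^2+q^2)I=:\rho^2 I,\qquad |R_{p,q}Z|=\rho|Z|.
\]

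\emph{Step 2: the field.} On $X=(Z,p,q)\in\R^{2m+2}$, so that $N_1=2\lceil N_0/2\rceil+2$, set $\omega(X)=|X|^{2D_0}$ and define
\[
  \dot Z=\omega(X)\,JZ+R_{p,q}\,F\bigl(R_{p,-q}Z\bigr),\qquad
  \dot{(p,q)}=\omega(X)\,J_2(p,q).
\]
The map $F(R_{p,-q}Z)$ is homogeneous of degree $2D_0$ in $X$, so every term has degree $2D_0+1$. All coefficients are rational and the construction is effective.

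\emph{Step 3: equilibria.} Suppose $X\ne 0$ is an equilibrium. Then $\dot{(p,q)}=0$ with $\omega>0$ forces $p=q=0$. In that case $\dot Z=|Z|^{2D_0}JZ$, which is nonzero unless $Z=0$. Hence $\mathcal F^{-1}(0)=\{0\}$.

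\emph{Step 4: the change of variables.} Along any solution, $\rho$ is constant, because $(p,q)$ is rotated by the scalar multiple $\omega$ of $J_2$. Moreover $\tfrac{d}{dt}R_{p,-q}=-\omega J R_{p,-q}$. If $\rho>0$, put $Y=R_{p,-q}Z/\rho^2$. Since $R_{p,-q}$ commutes with $J$, the two $\omega J$ terms cancel. Using $R_{p,-q}Z=\rho^2 Y$ and homogeneity, we get $F(\rho^2Y)=\rho^{2D_0}F(Y)$, and therefore
\[
  \dot Y=\rho^{-2}R_{p,-q}R_{p,q}\,\rho^{2D_0}F(Y)=\rho^{2D_0}F(Y).
\]
So $Y$ is an $F$-trajectory run at the constant speed $\rho^{2D_0}$, and $|Z|=\rho|Y|$. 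If $\rho=0$, then $|Z|$ is constant and the solution is global.

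\emph{Step 5: stability transfer.} Both directions go through Lemma~\ref{lem:scaling}.

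If $F$ is stable with constant $C\ge1$, then for $\rho>0$ we have $|Z(t)|=\rho|Y(t)|\le C\rho|Y(0)|=C|Z(0)|$, while $\rho$ stays constant. Hence $|X(t)|\le C|X(0)|$, the solutions are global, and $\mathcal F$ is stable.

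Conversely, let $\mathcal F$ be stable with constant $C'$. Take $(p,q)=(1,0)$ and $|Y(0)|=1$. Then $|Z(0)|=|Y(0)|=1$, so $|Y(t)|=|Z(t)|\le|X(t)|\le\sqrt2\,C'$. This shows that $F$-solutions starting on the unit sphere are global and bounded by $\sqrt2C'$. The scaling symmetry $Y_\lambda(t)=\lambda Y(\lambda^{D_0-1}t)$ then gives $|Y(t)|\le\sqrt2C'|Y(0)|$ for all initial states, and Lemma~\ref{lem:scaling} yields stability of $F$.

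\emph{Main obstacle.} The delicate step is Step 4. One must check that the rotation term cancels exactly even though $\omega$ depends on the full state and varies in time. This requires only that $\omega$ is a scalar and that $R_{p,\pm q}$ commutes with $J$. One must also confirm that the degree is exactly $2D_0+1$ for every $D_0$, not only for odd $D_0$, which is why the degree-2 substitution $R_{p,-q}Z$ is used.
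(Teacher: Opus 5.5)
Your proposal is correct and is essentially the paper's proof. The field is identical: your $R_{p,q}$ is the paper's $A=aI+bJ$, $R_{p,-q}=A^\top$, and $\omega=S^{D_0}$. The equilibrium argument, the rotating-frame cancellation and the forward stability transfer via Lemma~\ref{lem:scaling} also match, up to your harmless normalization $Y=R_{p,-q}Z/\rho^2$ (speed $\rho^{2D_0}$) versus the paper's $A^\top X$ (speed $\rho^2$).

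The only real variation is the converse direction. The paper shows that instability transfers by explicitly lifting unbounded-ratio segments $z_j$ to $X_j=A_jz_j$ with $\alpha_j(t)=\int_0^t(1+|z_j(s)|^2)^{D_0}\,ds$. You instead assume $\mathcal F$ stable and push its $\rho=1$ solutions down through the same conjugacy. That is slightly more economical, since no lift formula has to be checked.
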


\begin{proof}
\emph{Even dimension.} Put $N_e=2\lceil N_0/2\rceil$. If $N_0$ is odd,
replace $F(Y)$ by $(F(Y),0)$ on $\R^{N_0+1}$. The added coordinate is constant,
so stability is preserved in both directions, and the extended field remains
homogeneous of degree $D_0$. Denote this field on $\R^{N_e}$ by $F_e$.

\emph{Construction.} Choose
\[
  J=\operatorname{diag}\left(
      \begin{pmatrix}0&-1\\1&0\end{pmatrix},\ldots,
      \begin{pmatrix}0&-1\\1&0\end{pmatrix}\right),
  \qquad J^\top=-J,\quad J^2=-I.
\]
For $X\in\R^{N_e}$ and $a,b\in\R$, set
\[
  A=aI+bJ,\qquad S=|X|^2+a^2+b^2.
\]
Define $\mathcal F$ by
\begin{equation}\label{eq:rotation}
  \begin{aligned}
    \dot X&=S^{D_0}JX+A F_e(A^\top X),\\
    \dot a&=-S^{D_0}b,\\
    \dot b&= S^{D_0}a.
  \end{aligned}
\end{equation}
Every component is a rational homogeneous polynomial of degree $2D_0+1$:
$A^\top X$ has degree two, and multiplication by $A$ adds one to the degree
$2D_0$ of $F_e(A^\top X)$. The other terms have the same degree. The last two
components ensure that the field has this degree even if $F$ vanishes
identically. All operations are effective over $\Q$.

\emph{Unique equilibrium.} At a nonzero point, $S^{D_0}>0$, so the last two
equilibrium equations force $a=b=0$. Then $A=0$, and the first equation reduces
to $|X|^{2D_0}JX=0$. Since $J$ is invertible, this forces $X=0$, a contradiction.
The origin itself is an equilibrium.

\emph{Exact dynamics in the rotating frame.} Along every solution,
$\rho^2:=a^2+b^2$ is constant and
\[
  AA^\top=A^\top A=\rho^2I,\qquad \dot A=S^{D_0}JA.
\]
For $Z=A^\top X$, direct differentiation cancels the rotation terms:
\begin{equation}\label{eq:rotation-conjugacy}
  \dot Z=\rho^2 F_e(Z),\qquad |Z|=\rho|X|.
\end{equation}
Thus, when $\rho>0$, $Z$ follows a trajectory of $F_e$ with the constant
positive change of time $\tau=\rho^2t$.

\emph{Stability is preserved uniformly in $\rho$.} Suppose that $F_e$ is stable.
By Lemma~\ref{lem:scaling}, its solutions are global and satisfy
$|Z(\tau)|\le C|Z(0)|$ for some $C\ge1$. For every solution
of~\eqref{eq:rotation} with $\rho>0$, equation~\eqref{eq:rotation-conjugacy}
therefore gives, on its maximal interval,
\[
  |X(t)|=\frac{|Z(t)|}{\rho}\le C\frac{|Z(0)|}{\rho}=C|X(0)|.
\]
The constant is independent of $\rho$, including arbitrarily small positive
amplitudes. If $\rho=0$, then $a=b=0$ and $\dot X=|X|^{2D_0}JX$, so
$\frac{d}{dt}|X|^2=0$. In either case,
\begin{equation}\label{eq:rotation-bound}
  |(X(t),a(t),b(t))|^2
  \le C^2|X(0)|^2+\rho^2
  \le C^2|(X(0),a(0),b(0))|^2.
\end{equation}
Boundedness and the polynomial continuation criterion give global existence,
and this bound proves stability of $\mathcal F$.

\emph{Instability is preserved.} Suppose that $F_e$ is unstable. There must be
finite solution segments with arbitrarily large ratios of final to initial
norm: otherwise a uniform bound on all such ratios, together with continuation,
would imply stability by Lemma~\ref{lem:scaling}. By homogeneity, rescale these
segments to obtain solutions $z_j\colon[0,T_j]\to\R^{N_e}$ satisfying
\[
  |z_j(0)|=1,\qquad |z_j(T_j)|\longrightarrow\infty.
\]
Each segment lifts to~\eqref{eq:rotation} with $\rho=1$. Explicitly, set
\[
  \alpha_j(t)=\int_0^t(1+|z_j(s)|^2)^{D_0}\,ds,\qquad
  A_j(t)=\cos\alpha_j(t)\,I+\sin\alpha_j(t)\,J,
\]
and take
\[
  X_j=A_jz_j,\qquad a_j=\cos\alpha_j,\qquad b_j=\sin\alpha_j.
\]
Here $A_j^\top X_j=z_j$, and direct differentiation verifies all three
equations~\eqref{eq:rotation}. The initial full norm is $\sqrt2$, whereas the
final full norm is $\sqrt{1+|z_j(T_j)|^2}\to\infty$. Hence $\mathcal F$ has no
uniform amplification bound and is unstable by Lemma~\ref{lem:scaling}.
\end{proof}

\begin{remark}\label{rem:rotation-periodic}
The extension adds two coordinates when $N_0$ is even and three when it is odd.
It preserves Lyapunov stability, but it does not produce asymptotic stability:
on the invariant subspace $a=b=0$, every nonzero solution has constant norm
and is periodic, with period $2\pi/|X(0)|^{2D_0}$. These periodic orbits occur
arbitrarily close to the unique equilibrium.
\end{remark}

From now on the task is to build polynomial pairs $(P_n,b_n)$ with
$\B(P_n,b_n)<\infty$ exactly when $U$ halts on $n$.

%\section{Discussion}\label{sec:discussion}
%The smallest dimension in which we obtained undecidability, via a more involved proof is $N = 5$~\cite{}. Arnold believed that the stability question may be undecidable already in dimension two or three. Proving or disproving it in these dimensions remains an open problem.

%We also obtained an analogous undecidability result for Global asymptotic stability (GAS) of equilibria of polynomial vector fields in~\cite{}. The family of polynomial vector fields constructed in~\cite{} is non-homogenous and it remains an open question whether GAS for homogenous vector fields is decidable or not.

\section{Acknowledgement}
The results in this paper were obtained using ChatGPT 6 Astra and Claude Opus 5. The author takes no intellectual credit. The Lean formalization was carried out by ChatGPT 5.6 Sol (high) and Claude Opus 5.

%\paragraph{Design choices that could be varied.} The thresholds
%$\frac18,\frac1{16},\frac1{32}$ and the constants $K$, $\epsilon_*$, $\delta$ were
%chosen for convenient arithmetic only. The gain $(1+\theta^2)^2$ can be replaced
%by $(1+\theta^2)^q$ for any $q>1$, since Lemma~\ref{lem:summable} only needs the
%errors $(2\Gamma)^{-1/2}$ to be summable uniformly in $\theta(0)$. The two-copy
%simulation could be replaced by any robust scheme with the three properties used
%above: errors are not amplified near regular states, the budget is monotone
%everywhere, and positive output forces a genuine restart.

%\paragraph{Status.} This is a reorganized and simplified presentation of a
%candidate argument, not a refereed resolution of Arnold's question. The points
%deserving the closest scrutiny are the uniform output bound
%(Proposition~\ref{prop:halt}), the global defect estimate
%(Lemma~\ref{lem:defects}), and the penalty argument (Proposition~\ref{prop:G}).

\end{document}